\newtheorem{theorem}{Theorem}[section]
\newtheorem{conjecture}[theorem]{Conjecture}
\newtheorem{definition}[theorem]{Definition}
\newtheorem{lemma}[theorem]{Lemma}
\newenvironment{proof}[1][Proof]{\noindent\textbf{#1.} }
{\hfill \ \rule{0.5em}{0.5em}}
\newcommand{\points}{\mathcal{P}}
\newcommand{\lines}{\mathcal{L}}
\newcommand{\incidence}{\mathcal{I}}
\begin{document}

\title{\vspace{-1cm}Independent sets in polarity graphs}

\author{Michael Tait\thanks{Department of Mathematics, University of California San Diego, \texttt{mtait@math.ucsd.edu}} \and Craig Timmons\thanks{Department of Mathematics and Statistics, California State University Sacramento, \texttt{craig.timmons@csus.edu}. This work was supported by a grant from the Simons Foundation (\#359419, Craig Timmons). }}
\date{}
\maketitle
\vspace{-5mm}
\begin{abstract}
Given a projective plane $\Sigma$ and a polarity $\theta$ of $\Sigma$, the corresponding polarity graph is the graph whose vertices are the points of $\Sigma$, and two distinct points $p_1$ and $p_2$ are adjacent if $p_1$ is incident to $p_2^{ \theta}$ in $\Sigma$.  A well-known example of a polarity graph is the Erd\H{o}s-R\'{e}nyi orthogonal polarity graph $ER_q$, which appears frequently in a variety of extremal problems.  Eigenvalue methods provide an upper bound on the independence number of any polarity graph.  Mubayi and Williford showed that in the 
case of $ER_q$, the eigenvalue method gives the correct upper bound in order of magnitude.  We prove that this is also true for other families of polarity graphs.  This includes a family of polarity graphs for which the polarity is neither orthogonal nor unitary.  We conjecture that any polarity graph of a projective plane of order $q$ has an independent set of size $\Omega (q^{3/2})$.  Some related results are also obtained.  
 
\end{abstract}

%%%%%%%%%%%%%%%%%%%%%%%%%%%%%%%%%%%%%%%%%%%%%%%%%%%%%

\section{Introduction}

The use of finite geometry to construct graphs with interesting properties has a rich history in graph theory.  
One of the most well-known constructions is due to  Brown \cite{b}, and Erd\H{o}s, R\'{e}nyi, and S\'{o}s \cite{ers} who used an orthogonal polarity of a Desarguesian projective plane to give examples of graphs that give an asymptotically tight lower bound on the Tur\'{a}n number of the 4-cycle.  Later these same graphs 
were used to solve other extremal problems in a variety of areas such as Ramsey theory \cite{ar}, \cite{kpr}, hypergraph Tur\'{a}n theory \cite{lv}, and even the Cops and Robbers game on graphs \cite{bb}.  While our focus is not on the graphs of \cite{b} and \cite{ers}, we take a 
moment to define them.  Let $q$ be a power of a prime and $PG(2,q)$ be the projective geometry 
over the 3-dimensional vector space $\mathbb{F}_q^3$. We represent the points of $PG(2,q)$ as 
non-zero vectors $(x_0 , x_1 , x_2)$ where $x_i \in \mathbb{F}_q$.  Two vectors $(x_0 , x_1 , x_2)$ and 
$(y_0 , y_1 , y_2)$ are equivalent if $\lambda (x_0 , x_1 , x_2 ) = (y_0 , y_1 , y_2)$ for some 
$\lambda \in \mathbb{F}_q \backslash \{ 0 \}$.  The 
\emph{Erd\H{o}s-R\'{e}nyi orthogonal polarity graph}, denoted $ER_q$, is the graph whose vertices 
are the points of $PG(2,q)$.  Two distinct vertices $(x_0 , x_1 , x_2)$ and $(y_0 , y_1 , y_2)$ are adjacent 
if and only if $x_0 y_0  + x_1 y_1  +x_2 y_2 = 0$.    

The graph $ER_q$ has been studied as an interesting graph in its own right.  Parsons \cite{parsons}
determined the automorphism group of $ER_q$ and obtained several other results.  In particular, 
Parsons showed that for $q \equiv 1 ( \textup{mod}~4)$, $ER_q$ contains a $\frac{1}{2}(q+1)$-regular 
graph on $\binom{q}{2}$ vertices with girth 5.  This construction gives one of the best known lower bounds on 
the maximum number of edges in an $n$-vertex graph with girth 5.  It is still an open problem to determine 
this maximum, and for more on this problem, see \cite{aksv}; especially their Conjecture 1.7 and the discussion preceding it.  Bachrat\'{y} and \v{S}ir\'{a}\v{n} \cite{bs}
reproved several of the results of \cite{parsons} and we recommend \cite{bs} for a good introduction to the graph 
$ER_q$.  They also used $ER_q$ to construct vertex-transitive graphs with diameter two.  

Along with the automorphism group of a graph, two other important graph parameters are the independence number 
and the chromatic number.  
At this time we transition to a more general setting as many of the upper bounds 
on the independence number of $ER_q$ are true for a larger family of graphs.  Let $\Sigma = ( \points , \lines , \incidence)$ be a projective plane of order $q$.  A bijection $\theta : \points \cup \lines \rightarrow \points \cup \lines$ is 
a \emph{polarity} if $\theta ( \points ) = \lines $, $\theta ( \lines ) = \points$, $\theta^2$ is the identity map, and 
$p \incidence l$ if and only if $l^{ \theta} \incidence p^{ \theta}$.  A point $p \in \points$ is called 
an \emph{absolute point} if $p \incidence p^{ \theta}$.  A classical result of Baer is that any polarity of a 
projective plane of order $q$ has at least $q+1$ absolute points.  A polarity with 
$q+1$ points is called an \emph{orthogonal polarity}, and such polarities exist in the 
Desarguesian projective plane as well as in other non-Desarguesian planes.  For more on polarities see 
\cite{hp}, Chapter 12.  
Given a projective plane $\Sigma = ( \points , \lines , \incidence)$ of order $q$ and an orthogonal polarity $\theta$, the corresponding \emph{orthogonal polarity graph} $G( \Sigma , \theta)$ is the graph with vertex set $\points$ where 
two distinct vertices $p_1$ and $p_2$ are adjacent if and only if $p_1 \incidence p_2^{ \theta}$.  
Let $G^{ \circ} ( \Sigma , \theta)$ be the graph obtained from $G( \Sigma , \theta)$ by adding loops to the 
absolute points of $\theta$.  
The integer $q+1$ is an eigenvalue of $G^{ \circ} ( \Sigma , \theta )$ with multiplicity 1, and all other eigenvalues are $\sqrt{q}$ or $- \sqrt{q}$.  A result of Hoffman \cite{hoffman} implies 
\begin{equation}\label{evalue bound}
\alpha ( G^{\circ}  ( \Sigma , \theta) ) \leq \frac{ - (q^2 + q + 1) (-q^{1/2}) }{ q + 1 - \sqrt{q}  }
\end{equation}
which gives $\alpha ( G ( \Sigma  , \theta) ) \leq q^{3/2} + q^{1/2}  + 1$.  
An improved estimate in the case that $q$ is even was obtained by Hobart and Williford \cite{hw} 
using association schemes.  They conjectured that the upper bound (\ref{evalue bound}) can be improved to 
\[
\alpha ( G ( \Sigma , \theta )) \leq q ( q^{1/2} + 1) - 2 ( q^{1/2} - 1 ) ( q + q^{1/2} )^{1/2}
\]
but this is still open.  
Mubayi and Williford \cite{mw} showed that the upper bound 
(\ref{evalue bound}) gives the correct order of magnitude for $\alpha (ER_q)$.  One of the results of \cite{mw} is that 
\begin{equation}\label{eq:mw}
\alpha (ER_{q^2}) \geq \frac{1}{2} q^{3} + \frac{1}{2} q^2 + 1 
\end{equation}
whenever $q$ is a power of an odd prime.  
Their construction can be adapted in a straightforward manner to 
obtain the following lower bound on the independence number of wider class of orthogonal polarity graphs which we introduce now and will be the focus of much of our investigations.  We remark that the study of polarity graphs coming from 
non-Desarguesian planes was suggested in \cite{bs}.  

Let $q$ be a power of an odd prime and $f(X) \in \mathbb{F}_q [X]$.  The polynomial $f(X)$ is a  \emph{planar 
polynomial} if for each $a \in \mathbb{F}_q^*$, the map 
\[
x \mapsto f(x + a) - f(x) 
\]
is a bijection on $\mathbb{F}_q$.  Planar polynomials were introduced by Dembowski and Ostrom \cite{do} in their study of 
projective planes of order $q$ that admit a collineation group of order $q^2$.  Given a planar polynomial 
$f (X) \in \mathbb{F}_q [X]$, one can construct a projective plane as follows.  
Let $\mathcal{P} = \{ (x,y) : x,y \in \mathbb{F}_q \} \cup \{ (x) : x \in \mathbb{F}_q \} \cup \{ ( \infty ) \}$.  
For $a,b,c \in \mathbb{F}_q$, let 
\begin{equation*}
\begin{split}
[ a , b ] =  \{ (x, f(x-a)+b)  :x \in \mathbb{F}_q \} \cup \{ (a) \}, \\
[c] =  \{ ( c , y ) : y \in \mathbb{F}_{q} \} \cup \{ ( \infty ) \},  \\
[ \infty ] =  \{ ( c ) : c \in \mathbb{F}_{q} \} \cup \{ ( \infty ) \}.
\end{split}
\end{equation*} 
Let $\mathcal{L} = \{ [a,b] : a ,b \in \mathbb{F}_q \} \cup \{ [c] :c \in \mathbb{F}_q \} \cup \{ [ \infty ] \}$.  
Define $\Pi_f$ to be the incidence structure whose points are $\mathcal{P}$, whose lines are 
$\mathcal{L}$, and incidence $\mathcal{I}$ is given by containment.  When $f$ is a planar polynomial, 
$\Pi_f$ is a projective plane.  For instance if $f(X) = X^2$ and $q$ is any power of an odd prime, 
$\Pi_f$ is isomorphic to the Desarguesian plane $PG(2,q)$.  For other examples, see \cite{cm}.    

Assume that $f(X) \in \mathbb{F}_q [X]$ is a planar polynomial.  
The plane $\Pi_f$ possesses an orthogonal polarity $\omega$ given by 
\begin{center}
$( \infty )^{ \omega }= [ \infty ], ~~~~ [ \infty ]^{ \omega } = ( \infty ), ~~~~ (c)^{ \omega } = [-c], 
~~~~ [c]^{ \omega} = ( -c )$

\medskip

$(x,y)^{ \omega } = [-x , -y] , ~~\mbox{and} ~~ [a,b]^{ \omega } = (-a,-b)$
\end{center}
where $a,b,c \in \mathbb{F}_q$.  We write $G_f$ for the corresponding orthogonal polarity graph.  This is the graph 
whose vertices are the points of $\Pi_f$ and two distinct vertices $p_1$ and $p_2$ are adjacent in 
$G_f$ if and only if $p_1$ is incident to $p_2^{ \omega}$ in $\Pi_f$.  
For vertices of the form $(x , y)$ the adjacency relation is easily described in terms of $f$.  The distinct 
vertices $(x_1 , y_1)$ and $(x_2 , y_2)$ are adjacent if and only if 
\[
f(x_1 + x_2 ) = y_1 + y_2.
\]
      
Our first result is a generalization of (\ref{eq:mw}) to orthogonal polarity graphs which need not come from a 
Desarguesian plane.  

\begin{theorem}\label{th0}
If $q$ is a power of an odd prime and $f (X) \in \mathbb{F}_{q^2} [X]$ is a planar polynomial all of whose coefficients belong 
to the subfield $\mathbb{F}_q$, then 
\[
\alpha ( G_f ) \geq \frac{1}{2}q^2 ( q - 1)
\]
\end{theorem}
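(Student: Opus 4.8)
The plan is to exhibit an explicit independent set consisting entirely of affine points $(x,y)$, exploiting the hypothesis that the coefficients of $f$ lie in the subfield $\mathbb{F}_q \subseteq \mathbb{F}_{q^2}$. The one structural fact I would use repeatedly is that $f$ maps $\mathbb{F}_q$ into itself: if $x_1,x_2 \in \mathbb{F}_q$ then $x_1+x_2 \in \mathbb{F}_q$, and since every coefficient of $f$ lies in $\mathbb{F}_q$ we get $f(x_1+x_2) \in \mathbb{F}_q$. Because the adjacency of two affine points $(x_1,y_1)$ and $(x_2,y_2)$ is governed by the single equation $f(x_1+x_2)=y_1+y_2$, I can force non-adjacency by arranging that the first coordinates always come from $\mathbb{F}_q$ while the sum $y_1+y_2$ of the second coordinates never lands in $\mathbb{F}_q$.

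To make this precise I would fix $\theta \in \mathbb{F}_{q^2}\setminus\mathbb{F}_q$, so that $\{1,\theta\}$ is an $\mathbb{F}_q$-basis of $\mathbb{F}_{q^2}$ and every $y\in\mathbb{F}_{q^2}$ is written uniquely as $y=a+b\theta$ with $a,b\in\mathbb{F}_q$; note that $y\in\mathbb{F}_q$ if and only if $b=0$. Since $q$ is odd, the nonzero elements $\mathbb{F}_q^*$ split into $(q-1)/2$ pairs $\{b,-b\}$, and I let $B$ be a set containing exactly one representative from each pair, so that $|B|=(q-1)/2$ and $b_1+b_2\neq 0$ for all $b_1,b_2\in B$. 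I then set $T=\{a+b\theta : a\in\mathbb{F}_q,\ b\in B\}$ and define the candidate independent set $S=\{(x,y): x\in\mathbb{F}_q,\ y\in T\}$.

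The verification is direct. Given distinct points $(x_1,y_1),(x_2,y_2)\in S$ with $y_i=a_i+b_i\theta$, the $\theta$-component of $y_1+y_2$ equals $b_1+b_2$, which is nonzero because $b_1,b_2\in B$; this includes the case $b_1=b_2$, where $b_1+b_2=2b_1\neq 0$ since $b_1\neq 0$ and the characteristic is odd. Hence $y_1+y_2\notin\mathbb{F}_q$, whereas $f(x_1+x_2)\in\mathbb{F}_q$, so the equation $f(x_1+x_2)=y_1+y_2$ cannot hold and the two points are non-adjacent in $G_f$. Counting gives $|S|=|\mathbb{F}_q|\cdot|T|=q\cdot\frac{q(q-1)}{2}=\frac{1}{2}q^2(q-1)$, which yields the claimed bound $\alpha(G_f)\geq\frac{1}{2}q^2(q-1)$.

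The computations here are entirely routine; the only real content is locating the right split of the vertex set, namely restricting the first coordinate to the subfield $\mathbb{F}_q$ so that $f(x_1+x_2)$ is trapped in $\mathbb{F}_q$, and then choosing the second coordinates so that their pairwise sums avoid $\mathbb{F}_q$. The one step that demands a little care is checking that every pair is handled simultaneously — in particular the diagonal cases $x_1=x_2$ and $y_1=y_2$ — which is precisely where the oddness of $q$ enters, through $2b\neq 0$. I therefore anticipate no genuine obstacle beyond identifying this construction, which is a straightforward adaptation of the Mubayi--Williford argument for $ER_{q^2}$.
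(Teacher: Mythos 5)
Your proof is correct and is essentially the paper's own argument: your set $B$ is exactly the paper's half-set $\mathbb{F}_q^+$ (one representative from each pair $\{b,-b\}$), and your independent set $S$ coincides with the paper's $I = \{(x, y+z\mu) : x,y \in \mathbb{F}_q,\ z \in \mathbb{F}_q^+\}$, verified by the same observation that $f(x_1+x_2) \in \mathbb{F}_q$ while $y_1+y_2$ has nonzero $\theta$-component. No differences worth noting.
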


Even though we have the restriction that the coefficients of $f$ belong to $\mathbb{F}_q$, many of the known 
examples of planar functions have this property.
Most of the planar functions discussed in \cite{cm}, including those that give rise to the famous Coulter-Matthews plane, satisfy 
our requirement.  
  
It is still an open problem to determine an asymptotic formula for the independence number of 
$ER_p$ for odd prime $p$.  However, given the results of \cite{mw} and Theorem \ref{th0}, it would be quite surprising 
to find an orthogonal polarity graph of a projective plane of order $q$ whose independence number 
is $o(q^{3/2})$.  We believe that the lower bound $\Omega (q^{3/2})$ is a property shared by 
all polarity graphs, including polarity graphs that come from polarities which are not orthogonal.  

\begin{conjecture}\label{ind conj}
If $G( \Sigma , \theta )$ is a polarity graph of a projective plane of order $q$, then 
\[
\alpha (G ( \Sigma , \theta) ) = \Omega (q^{3/2} ).
\]
\end{conjecture}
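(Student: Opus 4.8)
We outline the strategy we would follow toward Conjecture~\ref{ind conj}, and indicate where the real difficulty lies. First I would record two structural facts valid for \emph{every} polarity graph $G(\Sigma,\theta)$. Since any two distinct points $p_1,p_2$ have at most one common neighbour --- the unique point of $p_1^\theta\cap p_2^\theta$ --- the graph is $C_4$-free, and consequently the neighbourhood of each vertex induces a matching. Secondly, for a line $\ell$ whose pole $\ell^\theta$ does not lie on $\ell$, the map $x\mapsto x^\theta\cap\ell$ is an involution of the $q+1$ points of $\ell$ whose fixed points are exactly the absolute points on $\ell$; hence the points of $\ell$ induce a matching in $G(\Sigma,\theta)$ and contain an independent set of size at least $(q+1)/2$. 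Either fact already yields the easy bound $\alpha(G(\Sigma,\theta))=\Omega(q)$, so the entire content of the conjecture is to gain the extra factor $q^{1/2}$ over this greedy estimate.

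The plan for that gain is to assemble an independent set from about $q^{1/2}$ lines at once. Geometrically a set $S$ of points is independent precisely when $S\cap p^\theta\subseteq\{p\}$ for every $p\in S$, so one wants to choose $\Theta(q^{1/2})$ lines and roughly $q/2$ pairwise non-conjugate points from each, creating no conjugacies \emph{between} the chosen lines. I would attempt this in two ways. For planes carrying enough algebraic structure --- in particular the planar-polynomial planes $\Pi_f$ --- one can hope to select the points by a coset or sumset condition on the coordinates, exactly as in the construction behind (\ref{eq:mw}) and Theorem~\ref{th0}; the clean adjacency rule $f(x_1+x_2)=y_1+y_2$ for $(x_1,y_1),(x_2,y_2)$ makes such a choice tractable whenever $f$ respects a subfield. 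For a general plane, lacking coordinates, I would instead take a pencil of lines (or a suitably chosen random family) and try to control the cross-line conflict graph by a semi-random or deletion argument, exploiting that each selected point is conjugate to at most one point on any other fixed line, which is just a restatement of $C_4$-freeness.

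The main obstacle is precisely this factor $q^{1/2}$, and the difficulty is genuine. The Caro--Wei/greedy bound uses only the degrees and yields no more than $\Omega(\sqrt n)=\Omega(q)$, and $C_4$-freeness by itself cannot improve this to the required order: in slightly lower degree ranges there are $C_4$-free graphs on $\Theta(q^2)$ vertices with independence number only $q^{4/3+o(1)}$, so no argument that ignores the geometry can even reach $q^{3/2}$. Any proof must therefore use that the degree of $G(\Sigma,\theta)$ is \emph{extremal} (about $\sqrt n$), or the finer spectral data --- all nontrivial eigenvalues equal to $\pm q^{1/2}$, as in (\ref{evalue bound}) and the association-scheme viewpoint of \cite{hw} --- or explicit coordinates. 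But turning any of these into a lower bound that is uniform over all projective planes of order $q$, which are not classified, and over all polarities $\theta$, where even the number and configuration of absolute points varies, is exactly the crux and is the reason the statement is posed as a conjecture.

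Accordingly, I would not expect a single uniform argument at first. The realistic route is to settle the Desarguesian orthogonal and unitary polarities and the semifield planes completely by explicit constructions in the spirit of Theorem~\ref{th0}, thereby confirming the conjecture on every family for which coordinates are available, and only then to search for a structural invariant common to all polarities --- most plausibly an extremal-degree or spectral one --- that forces the extra $q^{1/2}$ without reference to coordinates. I regard this last step, making the gain robust against the lack of a classification of finite projective planes, as the decisive and currently missing ingredient.
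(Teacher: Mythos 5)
The statement you were asked about is a conjecture: the paper itself offers no proof of it, and poses it precisely because none is known. Your proposal correctly recognizes this and does not claim a proof, so there is no paper proof to compare it against; what can be judged is whether your structural claims and your plan are sound, and they are. Your two preliminary facts are correct: a polarity graph is $C_4$-free because the only common neighbour of $p_1,p_2$ is the point $p_1^\theta\cap p_2^\theta$, and on a line $\ell$ with $\ell^\theta$ not incident to $\ell$ the map $x\mapsto x^\theta\cap\ell$ is an involution fixing exactly the absolute points of $\ell$, so each such line induces a matching and gives the easy bound $\alpha=\Omega(q)$. Your diagnosis of where the difficulty lies also matches the paper's framing: the only coordinate-free tool the paper ever uses is the Alon--Krivelevich--Sudakov sparse-neighbourhood bound \cite{aks}, which for a $C_4$-free graph of degree $q+1$ on $\Theta(q^2)$ vertices yields only $\alpha=\Omega(q\log q)$, and your observation that $C_4$-freeness alone (in lower degree ranges) is compatible with independence number $q^{4/3+o(1)}$ correctly explains why some geometric or spectral input beyond excluded $C_4$'s is indispensable.

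Your first proposed route is, in fact, exactly how the paper assembles its evidence for Conjecture~\ref{ind conj}: subfield/coset constructions exploiting the adjacency rule $f(x_1+x_2)=y_1+y_2$ give Theorem~\ref{th0} for planar-polynomial planes whose coefficients lie in a subfield; a square/non-square splitting combined with a subfield gives Theorem~\ref{non unitary orthogonal} for a Dickson semifield plane with a polarity that is neither orthogonal nor unitary; and the unitary case is immediate since the $q^{3/2}+1$ absolute points form an independent set (with Theorem~\ref{unitary} refining this to avoid absolute points, at the cost of the exponent dropping to $5/4$). Your second route --- a semi-random or deletion argument over a pencil of lines in an arbitrary, non-coordinatizable plane --- is not pursued in the paper at all, and as you yourself note, controlling the cross-line conflicts well enough to beat the $q\log q$ barrier is precisely the missing ingredient. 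So your proposal should be read not as a proof but as an accurate map of the partial results the paper actually proves and of the open gap that makes the statement a conjecture.
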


There are polarity graphs which are not orthogonal polarity graphs for which Conjecture \ref{ind conj} holds.  
If $G( \Sigma , \theta )$ is a polarity graph where $\theta$ is unitary and $\Sigma$ has order $q$, then 
$\alpha ( G ( \Sigma , \theta ) ) \geq q^{3/2} + 1$.  Indeed, the absolute points of any polarity graph form an independent set 
and a unitary polarity has $q^{3/2} + 1$ absolute points.  In Section \ref{division ring plane} we show that 
there is a polarity graph $G( \Sigma , \theta)$ where $\theta$ is neither orthogonal or unitary and Conjecture 
\ref{ind conj} holds.  

\begin{theorem}\label{non unitary orthogonal}
Let $p$ be an odd prime, $n \geq 1$ be an integer, and $q = p^{2n}$.  There is a polarity graph $G( \Sigma , \theta)$ such that 
$\Sigma$ has order $q$, $\theta$ is neither orthogonal nor unitary, and 
\[
\alpha ( G( \Sigma , \theta ) ) \geq \frac{1}{2}q ( \sqrt{q} - 1 ).
\]
\end{theorem}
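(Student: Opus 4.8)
The statement has two separate contents — that $\theta$ is neither orthogonal nor unitary, and that $\alpha(G(\Sigma,\theta)) \ge \tfrac12 q(\sqrt q - 1)$ — and I would attack both from a single explicit description of the construction of Section~\ref{division ring plane}. Write $\sqrt q = p^{n}$, let $\mathbb{F}_{\sqrt q} \subseteq \mathbb{F}_q$ be the subfield of order $\sqrt q$, and let $t \mapsto \bar t = t^{\sqrt q}$ be the involutory automorphism of $\mathbb{F}_q$ whose fixed field is $\mathbb{F}_{\sqrt q}$; put $\textup{Tr}(t) = t + \bar t$ and $N(t) = t\bar t$ for the relative trace and norm into $\mathbb{F}_{\sqrt q}$. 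The first step is bookkeeping: I would record the adjacency rule of $G(\Sigma,\theta)$ on the affine points $(x,y)$ with $x,y \in \mathbb{F}_q$ as an identity relating a fixed function of $(x_1,x_2)$ — coming from the coordinatizing structure of $\Sigma$ — to a combination of $y_1,y_2$ twisted by $\bar{\cdot}$, and then specialize $p_1 = p_2$ to obtain the absolute-point condition $p \incidence p^{\theta}$ as an equation in $\textup{Tr}(x)$ and $\textup{Tr}(y)$.

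To show $\theta$ is neither orthogonal nor unitary I would count its absolute points. Applying $\textup{Tr}$ reduces the absolute-point condition to a relation inside the small field $\mathbb{F}_{\sqrt q}$, whose solution count, weighted by the constant fibre size of $\textup{Tr}$ and combined with the absolute points at infinity, yields a closed form for the total $m$. The assertion ``neither orthogonal nor unitary'' is then the numerical statement $m \ne q+1$ and $m \ne q^{3/2}+1$, which I would read off from this closed form — noting in addition that, $\Sigma$ being non-Desarguesian, its polarity cannot come from a Hermitian form, so ``not unitary'' also holds for the structural reason that the absolute points cannot form a Hermitian unital.

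For the lower bound I would imitate the Mubayi–Williford argument as used in Theorem~\ref{th0}. The mechanism is a square/nonsquare separation: the ``$x$-side'' of the adjacency equation is forced into the image of the defining function, while the $y$-coordinates are drawn from a set on which the forced combination provably lands on the opposite side of the square/nonsquare dichotomy, hence outside that image. Concretely, fix $\gamma \in \mathbb{F}_q$ with $\bar\gamma = -\gamma$ (such $\gamma$ exist because $p$ is odd, so the $(-1)$-eigenspace of $\bar{\cdot}$ is nontrivial), choose a transversal $T \subseteq \mathbb{F}_{\sqrt q}^{*}$ of the pairs $\{t,-t\}$ so that $|T| = \tfrac12(\sqrt q - 1)$ and $T \cap (-T) = \emptyset$, and set $Y = \gamma T$ and $S = \{(x,y) : x \in \mathbb{F}_q,\ y \in Y\}$, whence $|S| = q|Y| = \tfrac12 q(\sqrt q - 1)$. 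For two points of $S$ the combination of $y$-coordinates produced by adjacency has, after absorbing $\bar{\cdot}$, the shape $\gamma(t_1 + t_2)$ with norm $N(\gamma)(t_1+t_2)^2$; since $N(\gamma) = -\gamma^2 \in \mathbb{F}_{\sqrt q}$, an appropriate choice of $\gamma$ (depending on the residue of $\sqrt q$ modulo $4$) keeps $\gamma(t_1+t_2)$ on one fixed side of the dichotomy by the norm criterion (an element of $\mathbb{F}_q^{*}$ is a square exactly when its norm is a square in $\mathbb{F}_{\sqrt q}$), so it cannot lie in the image of the defining function. The condition $T \cap (-T) = \emptyset$ guarantees $t_1 + t_2 \ne 0$, and the same-level pairs $(x_1,y),(x_2,y)$ are covered since their combination is $2\gamma t \ne 0$; a short separate check disposes of the finitely many points at infinity.

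The main obstacle is the tension between the two halves of the argument. To escape the Desarguesian classification — where the only polarities are orthogonal or unitary — the plane $\Sigma$ must be non-Desarguesian, yet the independence argument needs enough control over the image of the defining function to run the square/nonsquare dichotomy, and the subfield arithmetic works against this: every element of $\mathbb{F}_{\sqrt q}$ is a square in $\mathbb{F}_q$ (because $\sqrt q - 1$ divides $(q-1)/2$), so the same-level and absolute-point relations, which are governed by traces lying in $\mathbb{F}_{\sqrt q}$, are precisely the ones that resist avoidance. Threading the construction of Section~\ref{division ring plane} so that the forced $y$-combinations are pushed onto the admissible side while the $x$-side stays inside the image of the defining function, and simultaneously pinning the count $m$ away from both $q+1$ and $q^{3/2}+1$, is where essentially all of the difficulty lies; the norm computation and the treatment of the points at infinity are then routine.
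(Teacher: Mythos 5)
Your proposal has a genuine gap, and it is the one your own closing paragraph concedes: the theorem is an existence statement, and you never exhibit the object whose existence is claimed. The plane $\Sigma$, the polarity $\theta$, the ``coordinatizing structure,'' and the ``defining function'' remain placeholders throughout, so neither the absolute-point count nor the independence argument can be checked; what you call ``where essentially all of the difficulty lies'' is in fact the entire content of the theorem. The paper's proof consists of supplying exactly this data. With $q=p^{2n}$, it takes the division ring $D=\{x+\lambda y: x,y\in\mathbb{F}_q\}$ with multiplication $(x+\lambda y)\cdot(z+\lambda t)=xz+\theta t y^{\sigma}+\lambda(yz+x^{\sigma}t)$, where $x^{\sigma}=x^{p^n}$ and $\theta$ generates $\mathbb{F}_q^*$, the plane $\Pi_D$ coordinatized by $D$ (of order $|D|=q^2$), and the explicit polarity $\omega$ with $(x,y)^{\omega}=[x^{\alpha},-y^{\alpha}]$, where $(x+\lambda y)^{\alpha}=x+\lambda y^{\sigma}$. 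That $\omega$ is neither orthogonal nor unitary follows at once from the known count of $|D|^{5/4}+1$ absolute points (Hughes--Piper), which lies strictly between $|D|+1$ and $|D|^{3/2}+1$; no trace computation is needed. The independent set is $I=\{(x_1+\lambda x_2, y_1+\lambda y_2): x_1,y_1\in\mathbb{F}_{p^n},\ x_2\in\square_q,\ y_2\in\mathbb{F}_q\}$: adjacency of two of its points forces, in the first component, $\theta=(x_2z_2)^{-1}(-y_1-t_1-x_1z_1)$, whose right-hand side is a square (elements of the subfield $\mathbb{F}_{p^n}$ are squares in $\mathbb{F}_q$, and $x_2z_2\in\square_q$), while $\theta$, being a generator, is not.

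Two further points in your sketch would fail even if a construction were supplied. First, your fallback argument for ``not unitary''---that a non-Desarguesian plane admits no Hermitian form, hence no unitary polarity---is false: a unitary polarity is one whose absolute points number $q^{3/2}+1$ (forming a unital, not necessarily Hermitian), and non-Desarguesian planes do admit them; the paper's concluding remark exploits precisely such a polarity on the Coulter--Matthews plane. So ``not unitary'' genuinely requires counting the absolute points of a concrete polarity. Second, the combinatorial shape of your set---$y$ constrained to $\gamma T$, $x$ completely free---cannot be made independent in the plane the paper uses. There the adjacency of $(x_1+\lambda x_2, \cdot)$ and $(z_1+\lambda z_2, \cdot)$ splits into two equations, and the one carrying the square/nonsquare obstruction is $x_1z_1+\theta x_2z_2=-y_1-t_1$; with all of $x_1,x_2,z_1,z_2$ unconstrained (e.g.\ take $z_1=0$, $z_2=1$) both equations can be solved for any prescribed pair of second coordinates, so every set of the form $\{(x,y): x\in D,\ y\in Y\}$ with $|Y|\geq 2$ contains an edge. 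A constraint of the type you describe only works when the $x$-side of the adjacency is a non-surjective function of the first coordinates alone, as for $f(x_1+x_2)=y_1+y_2$ with $f$ of square image---which is the Desarguesian, orthogonal situation excluded by the theorem. The paper instead constrains the first coordinates (subfield first component, square second component) and only half of the second-coordinate data, which is what makes the nonsquare $\theta$ collide with a provably square quantity.
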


In connection with Theorem \ref{th0} and Conjecture \ref{ind conj}, we would like to mention the work
of De Winter, Schillewaert, and Verstra\"{e}te \cite{dwsv} and Stinson \cite{stinson}.  In these papers the 
problem of finding large sets of points and lines such that there is no incidence between these sets is investigated.  
Finding an independent set in a polarity graph is related to this problem as an edge in a polarity graph corresponds to an incidence in the geometry.  The difference is that when one finds an independent set in a polarity graph, choosing the points determines the lines.  In \cite{dwsv} and \cite{stinson}, one can choose the points and lines independently.  

As mentioned above, Conjecture \ref{ind conj} holds for unitary polarity graphs as the absolute points form an 
independent set.  Mubayi and Williford \cite{mw} asked whether or not there is an independent set in 
the graph $U_q$ of size $\Omega (q^{3/2} )$ that contains no absolute points.  For $q$ a square of a prime power, 
the graph $U_q$ has the same vertex set as $ER_q$ and two distinct vertices $(x_0 , x_1, x_2)$ and 
$(y_0 , y_1 , y_2)$ are adjacent if and only if $x_0 y_0^{ \sqrt{q} } +  x_1 y_1^{ \sqrt{q} } +x_2 y_2^{ \sqrt{q} } =0$.
We could not answer their question, but we were able to produce an independent set of size $\Omega ( q^{5/4} )$ that 
contains no absolute points.  We remark that a lower bound of $\Omega (q)$ is trivial.

\begin{theorem}\label{unitary}
Let $q$ be an even power of an odd prime.  The graph $U_q$ has an independent set 
$I$ that contains no absolute points and 
\[
|I |  \geq 0.19239 q^{5/4} - O(q).
\]
\end{theorem}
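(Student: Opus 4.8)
The plan is to pass to the affine chart $x_0 = 1$ and convert the whole question into the nonvanishing of one sesquilinear expression. Writing a point as $(1,a,b)$ with $a,b \in \mathbb{F}_q$ and setting $s = \sqrt q$, the points $(1,a_1,b_1)$ and $(1,a_2,b_2)$ are adjacent in $U_q$ exactly when $1 + a_1 a_2^{s} + b_1 b_2^{s} = 0$, and $(1,a,b)$ is absolute exactly when $1 + a^{s+1} + b^{s+1} = 0$. Hence a set of affine points is independent and contains no absolute point precisely when $1 + a_1 a_2^{s} + b_1 b_2^{s} \neq 0$ for every ordered pair drawn from it, the diagonal included. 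So I would look for a large $S \subseteq \mathbb{F}_q^2$ on which this expression never vanishes, or vanishes so rarely that a cheap deletion step repairs it.

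First I would exploit the subfield $\mathbb{F}_s \subset \mathbb{F}_q$. Restricting the middle coordinate to $a \in \mathbb{F}_s$ forces $a_1 a_2^{s} = a_1 a_2 \in \mathbb{F}_s$, so adjacency can occur only when $b_1 b_2^{s} \in \mathbb{F}_s$. The key elementary observation is that $b_1 b_2^{s} \in \mathbb{F}_s \iff b_1^{\,s-1} = b_2^{\,s-1} \iff b_1,b_2$ lie in the same coset of $\mathbb{F}_s^{\ast}$ in $\mathbb{F}_q^{\ast}$. Organizing the $b$-coordinates by the $s+1$ cosets of $\mathbb{F}_s^{\ast}$ therefore makes points from distinct cosets automatically non-adjacent, decomposing the relevant part of $U_q$ into $s+1$ mutually non-interacting blocks. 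Inside one coset $g\,\mathbb{F}_s^{\ast}$ the substitution $b = gt$ turns the adjacency condition into $1 + a_1 a_2 + N(g)\,t_1 t_2 = 0$ over $\mathbb{F}_s$, where $N(g) = g^{s+1}$; that is, an affine orthogonal polarity graph of order $s$. Since the blocks never interact, independent sets chosen inside them may be unioned freely.

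The main step is then to produce inside each block an independent set of size $\Omega(s^{3/2}) = \Omega(q^{3/4})$ that avoids that block's absolute conic $a^2 + N(g)\,t^2 = -1$. Summing over the $s+1 \sim \sqrt q$ blocks yields $\Omega(\sqrt q \cdot q^{3/4}) = \Omega(q^{5/4})$, while excising all absolute points costs only $O(\sqrt s)$ per block, hence $O(q)$ in total, which is exactly the error term in the statement. To build the block sets I would adapt the Mubayi--Williford construction and the construction behind Theorem \ref{th0}: take a structured (Baer-subline/coset-type) family whose size is governed by a free parameter $\theta$, bound the number of adjacent pairs it induces by a multiplicative-character (Gauss/Jacobi) sum, and finish with a Caro--Wei/Tur\'an deletion. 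The square-root cancellation in the character sum is what produces a factor $\sqrt{1-\theta}$ in the surviving count, and optimizing the resulting bound (of the shape $\tfrac12\, s^{3/2}\,\theta\sqrt{1-\theta}$) over $\theta$ gives its maximum $\tfrac{1}{3\sqrt3} = 0.1924\ldots$ at $\theta = \tfrac23$, which is the stated constant $0.19239$.

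The hardest part, and the entire crux, is this block-level estimate. A random or unstructured choice inside a block yields an independent set of size only $\Theta(s)$, which would collapse the whole bound to the trivial $\Omega(q)$; beating it requires the number of bad pairs to drop below the random prediction by a genuine factor of $\sqrt s$. That in turn forces the block set to carry exactly the right multiplicative structure so that the governing character sum truly cancels, and it must do so uniformly in the twist $N(g)$ and in the prime power $s = \sqrt q$. Establishing this cancellation uniformly, and confirming that the absolute-point conic can be removed at negligible cost in every block, is where essentially all the difficulty concentrates; the coset decomposition and the affine reformulation above are just the scaffolding around it.
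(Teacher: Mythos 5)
Your affine reformulation and your decomposition are correct, and the decomposition is genuinely different from the paper's: the paper also splits the affine points into roughly $\sqrt{q}$ mutually non-adjacent blocks, but it does so \emph{additively}, fixing $\mu$ with $\mu^{\sqrt{q}}+\mu=0$ and taking $X_c=\{(1,a,b+c\mu):a,b\in\mathbb{F}_{\sqrt{q}}\}$ for $c\in\mathbb{F}_{\sqrt{q}}$, whereas you split the last coordinate along the $\sqrt{q}+1$ \emph{multiplicative} cosets of $\mathbb{F}_{\sqrt{q}}^{*}$ in $\mathbb{F}_{q}^{*}$. Your verification that distinct cosets span no edges (via $b_1b_2^{\sqrt{q}}\in\mathbb{F}_{\sqrt{q}}\iff b_1^{\sqrt{q}-1}=b_2^{\sqrt{q}-1}$) is sound, as is the reduction of each block to the adjacency $1+a_1a_2+N(g)\,t_1t_2=0$ over $\mathbb{F}_{\sqrt{q}}$.

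The genuine gap is exactly the step you call the crux: you never prove the block-level bound of roughly $0.19239\,q^{3/4}$, you only sketch a character-sum-plus-deletion strategy and concede that establishing it ``uniformly in the twist $N(g)$'' is where all the difficulty lies. The missing idea is that this step requires no new estimate at all: your block graph is the induced subgraph, on all but $O(\sqrt{q})$ vertices, of the polarity graph of the form $x_0y_0+x_1y_1+N(g)x_2y_2$ on $PG(2,\sqrt{q})$, and in odd characteristic every non-degenerate symmetric form on three variables defines a polarity conjugate to the standard one (scale the form by $N(g)$ so its determinant becomes a square, then apply a congruence; scaling does not change the polarity). Hence each of your blocks is an induced subgraph of $ER_{\sqrt{q}}$ missing only $O(\sqrt{q})$ vertices, and Theorem 5 of Mubayi and Williford applies verbatim, with no dependence on the twist; deleting the block's absolute conic then costs $\Theta(\sqrt{q})$ vertices per block --- not $O(q^{1/4})$ as you wrote, though your total of $O(q)$ is still correct --- and the theorem follows. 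This is precisely how the paper handles its own blocks: each $X_c$ is shown isomorphic to an induced subgraph of $ER_{\sqrt{q}}$ and the Mubayi--Williford bound is quoted, not re-derived. A further sign that your sketch should be replaced by the citation is the constant: your proposed optimization of $\tfrac12\theta\sqrt{1-\theta}$ yields $1/(3\sqrt{3})\approx 0.19245$, which is not the constant $0.19239$ in the statement, so the sketch as written does not reproduce the bound you need (and, when $q$ is a fourth power, the cited theorem even allows the constant to be raised to $\tfrac12$).
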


Related to the independence number is the chromatic number.   In \cite{ptt}, it is shown that 
$\chi ( ER_{q^2} ) \leq 2q + O ( \frac{q}{ \log q} )$ whenever $q$ is a power of an odd prime.  
Here we prove that this upper bound holds for another family of orthogonal polarity graphs.

\begin{definition}
Let $p$ be an odd prime.  Let $n$ and $s$ be positive integers such that $s < 2n$ and $\frac{2n}{s}$ is an odd integer.  
Let $d = p^s$ and $q= p^n$.  We call the pair $\{q,d \}$ an 
\emph{admissible pair}.  
\end{definition}

If $\{ q , d \}$ is an admissible pair, then the polynomial 
$f(X) = X^{d+1} \in \mathbb{F}_{q^2}[X]$ is a planar polynomial.  
For a nice proof, see Theorem 3.3 of \cite{cm}. 

\begin{theorem}\label{th1}
Let $q$ be a power of an odd prime and $\{q , d \}$ be an admissible pair.  If $f(X) = X^{d + 1}$, then 
\[
\chi ( G_f ) \leq 2q + O \left( \frac{q}{ \log q} \right).
\]
\end{theorem}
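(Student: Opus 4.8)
The plan is to adapt the argument of \cite{ptt} for $\chi(ER_{q^2})$ to the orthogonal polarity graph $G_f$ with $f(X)=X^{d+1}$. Since $f\in\mathbb{F}_{q^2}[X]$, the plane $\Pi_f$ has order $q^2$, so $G_f$ has $q^4+q^2+1$ vertices and is $C_4$-free: any two distinct vertices of a polarity graph have at most one common neighbour (the intersection point of their two polar lines). The points at infinity induce a star centred at $(\infty)$ together with the $(x)$'s, each $(x)$ being joined to a single affine fibre, so they cost only $O(1)$ colours and the real work is on the $q^4$ affine points $(x,y)$, adjacent when $f(x_1+x_2)=y_1+y_2$. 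Because $X^{d+1}=X^{p^s+1}$ is a Dembowski--Ostrom polynomial, $f(x_1+x_2)=f(x_1)+f(x_2)+B(x_1,x_2)$ with the symmetric form $B(x_1,x_2)=x_1x_2^d+x_1^dx_2$, so the substitution $w_i=y_i-f(x_i)$ turns the adjacency into $w_1+w_2=B(x_1,x_2)$.

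First I would partition the affine points into the $q$ slabs $V_a=\{(x,w):x\in a+\mathbb{F}_q\}$, one for each coset of $\mathbb{F}_q$ in $\mathbb{F}_{q^2}$. Writing $x_i=a+t_i$ with $t_i\in\mathbb{F}_q$ and using that $t\mapsto t^d$ is additive, a short computation gives $B(x_1,x_2)=2f(a)+L_a(t_1)+L_a(t_2)+(t_1t_2^d+t_1^dt_2)$ with $L_a(t)=at^d+a^dt$, so the fibrewise bijection $v_i=w_i-L_a(t_i)-f(a)$ reduces the adjacency inside $V_a$ to $v_1+v_2=t_1t_2^d+t_1^dt_2\in\mathbb{F}_q$. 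Fixing $\mu\in\mathbb{F}_{q^2}\setminus\mathbb{F}_q$ and writing $z=z_0+z_1\mu$, let $\rho(z)=z_1$. Since the right-hand side lies in $\mathbb{F}_q$, two vertices of $V_a$ can be adjacent only when $\rho(v_1)=-\rho(v_2)$. Choosing $S\subseteq\mathbb{F}_q^*$ with $S\cap(-S)=\emptyset$ and $S\cup(-S)=\mathbb{F}_q^*$ (possible as $q$ is odd), the sets $\{(x,w)\in V_a:\rho(v)\in S\}$ and $\{(x,w)\in V_a:\rho(v)\in -S\}$ are each independent. Colouring them with two colours reserved for $V_a$ and using distinct colours across the $q$ slabs properly colours every affine vertex with $\rho(v)\neq 0$ with exactly $2q$ colours.

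What remains is the set $\mathcal{B}$ of affine vertices with $\rho(v)=0$, i.e. $v\in\mathbb{F}_q$. It has $q^3$ vertices, and inside each slab it is a polarity-type graph of order $q$ (adjacency $v_1'+v_2'=t_1t_2^d+t_1^dt_2$ on $\mathbb{F}_q\times\mathbb{F}_q$) of maximum degree $O(q)$. The key claim is that $\mathcal{B}$ has maximum degree $O(q)$ in $G_f$ as a whole. Granting this, note that $\mathcal{B}$ is an induced subgraph of the $C_4$-free graph $G_f$, so every neighbourhood in $\mathcal{B}$ induces a matching and $\mathcal{B}$ is locally sparse; the Alon--Krivelevich--Sudakov theorem on colouring graphs with sparse neighbourhoods (the locally-sparse form of Johansson's theorem) then gives $\chi(\mathcal{B})=O(q/\log q)$. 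Combining the $2q$ colours on the good affine vertices, the $O(q/\log q)$ colours on $\mathcal{B}$, and $O(1)$ colours at infinity yields $\chi(G_f)\le 2q+O(q/\log q)$.

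The hard part will be the maximum-degree bound for $\mathcal{B}$, specifically counting the bad neighbours a bad vertex acquires across the other slabs. Fixing a bad vertex $(x_1,w_1)$ and a slab $a_2$, a neighbour there is bad exactly when $\rho(b)\,t_2^{p^s}+\rho(b^d)\,t_2=\gamma$ with $b=x_1-a_2$ and $\gamma$ depending on $(x_1,w_1,a_2)$; this is an additive polynomial equation in $t_2\in\mathbb{F}_q$ of degree $p^s=d$. This is where the admissibility hypothesis enters: writing $2n/s=k$ with $k$ odd forces $\gcd(s,n)=s/2$, so the kernel of $t\mapsto\rho(b)t^{p^s}+\rho(b^d)t$ has size at most $p^{\gcd(s,n)}=\sqrt{d}$. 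The easy per-slab bound then only gives $O(q\sqrt{d})$ bad neighbours, which is too weak; the technical heart of the proof is to show that, summed over all $q$ slabs, the solution counts total $O(q)$, i.e. that the constants $\gamma$ cannot place full-kernel fibres in too many slabs simultaneously. Once this uniform $O(q)$ degree bound on $\mathcal{B}$ is established, $C_4$-freeness and the locally-sparse colouring theorem finish the argument.
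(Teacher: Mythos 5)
Your overall architecture is the same as the paper's: pass to the affine points via the Dembowski--Ostrom substitution (your $(x,w)$-model is exactly the paper's graph $\mathcal{A}_{q^2,d}$), split them into $q$ slabs indexed by cosets of $\mathbb{F}_q$, spend $2q$ colours on the vertices whose adjusted second coordinate has nonzero $\mu$-part, and colour the leftover set $\mathcal{B}$ using a maximum-degree bound together with $C_4$-freeness and Alon--Krivelevich--Sudakov, plus $O(1)$ colours at infinity. All of that is sound, and your reduction of the cross-slab adjacency to the additive equation $\rho(b)t_2^{d}+\rho(b^d)t_2=\gamma$ is correct. But the one step you explicitly defer --- the $O(q)$ degree bound on $\mathcal{B}$ --- is the entire content of the proof, and you give no argument for it: your gcd computation only yields kernel size at most $p^{\gcd(s,n)}=\sqrt{d}$, hence $O(q\sqrt{d})$ neighbours, which you yourself note is too weak, and the proposed amortization (``the $\gamma$'s cannot place full-kernel fibres in too many slabs'') is asserted, not proved. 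As written, the proof has a hole precisely at what you call its technical heart.

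What closes the gap in the paper is a sharper fact: the per-slab count is not merely at most $\sqrt{d}$, it is at most $1$, provided $\mu$ is chosen well. The paper first chooses $\mu$ as a root of $X^d+(\theta^{q+1})^{d-1}X$ (Lemma \ref{lemma1}), so that in $\mu^d=u_1+u_2\mu$ the coefficient $u_2$ is a $(d-1)$-th power in $\mathbb{F}_{q^2}$. In the paper's coordinates your cross-slab equation reads $t_2^d(\gamma-\beta)+t_2(\gamma-\beta)^d u_2=\xi$, i.e.\ $t_2^d+u_2\delta^{d-1}t_2=\xi\delta^{-1}$ with $\delta=\gamma-\beta\neq 0$. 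Admissibility enters here, and not where you placed it: since $2n/s$ is odd, $t=(q^2-1)/(d-1)$ is odd, so $-1$ is not a $(d-1)$-th power in $\mathbb{F}_{q^2}$; writing $u_2=w^{d-1}$, the element $-u_2\delta^{d-1}=-(w\delta)^{d-1}$ is therefore not a $(d-1)$-th power, so the linearized polynomial $X^d+u_2\delta^{d-1}X$ has trivial kernel and permutes $\mathbb{F}_{q^2}$ (Lemma \ref{lemma2}). Hence each of the $q-1$ other slabs contributes at most one bad neighbour, the degree of every vertex of $\mathcal{B}$ is less than $2q$, and the colouring finishes as you describe (Lemma \ref{lemma7}). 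Note the obstruction is multiplicative, not a matter of degrees: one can check that the coset of $u_2$ modulo $(d-1)$-th powers does not change when $\mu$ is replaced by $c\mu+e$ (it gets multiplied by $c^{d-1}$), so kernel triviality in fact holds for your arbitrary $\mu$ as well --- but proving it requires exactly the argument above about $-1$ not being a $(d-1)$-th power, which no gcd or counting/amortization argument of the kind you sketch will produce.
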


The eigenvalue bound (\ref{evalue bound}) gives a lower bound of $\chi (G_f ) \geq \frac{q^4 + q^2 + 1}{q^3 + q + 1}$ so that the leading term in the upper bound of Theorem \ref{th1} is best possible up to a constant factor.  Not only does this bound 
imply that $\alpha (G_f) \geq \frac{1}{2} q^3 - o(q^3)$, but shows that most of the vertices of $G_f$ can be partitioned into 
large independent sets.    

The technique that is used to prove Theorem \ref{th1} is the same as the one used in \cite{ptt} and can 
be applied to other orthogonal polarity graphs.  In Section \ref{coloring 2}, we sketch an argument that 
the bound of Theorem \ref{th1} also holds for a plane coming from a Dickson commutative division ring (see \cite{hp}).  
It is quite possible that the technique applies to more polarity graphs, but in order to obtain a general result, some new ideas 
will be needed.  Furthermore, showing that every polarity graph of a projective plane of order $q$ has chromatic number 
at most $O( \sqrt{q} )$ is a significant strengthening of Conjecture \ref{ind conj}.  When $p$ is prime, 
it is still unknown whether or not $\chi (ER_p) = O( \sqrt{p} )$.  

%%%%%%%%%%%%%%%%%%%%%%%%%%%%%%%%%%%%%%%%%%%%%%%%%%%%%%

\section{Proof of Theorem \ref{th0}}

Let $q$ be a power of an odd prime and $f(X) \in \mathbb{F}_{q^2} [X]$ be a planar polynomial, all of whose coefficients 
are in the subfield $\mathbb{F}_q$.  Let 
$G_f$ be the orthogonal polarity graph whose construction is given before the statement of Theorem \ref{th0}.  
Partition $\mathbb{F}_q^*$ into two sets $\mathbb{F}_q^+$ and $\mathbb{F}_q^-$ where 
$a \in \mathbb{F}_q^+$ if and only if $-a \in \mathbb{F}_q^-$.  Let $\mu$ be a root of an irreducible quadratic over 
$\mathbb{F}_q$ and so $\mathbb{F}_{q^2} = \{ a + \mu b : a , b \in \mathbb{F}_q \}$.  Let
\[
I = \{ ( x , y + z \mu ) : x , y \in \mathbb{F}_q , z \in \mathbb{F}_q^+ \}.
\]
Note that $|I| = \frac{1}{2} q^2 ( q - 1)$ and we claim that $I$ is an independent set.  Suppose 
$(x_1 , y_1 + z_1 \mu )$ and $(x_2 , y_2 + z_2 \mu )$ are distinct vertices in $I$ and that they are adjacent. Then
\begin{equation}\label{pp}
f(x_1 + x_2) = y_1 + y_2 + ( z_1 + z_2 ) \mu.
\end{equation}
The left-hand side of (\ref{pp}) belongs to $\mathbb{F}_q$ since the coefficients of $f$ are in $\mathbb{F}_q$ and 
$x_1 + x_2 \in \mathbb{F}_q$.
The right-hand side of (\ref{pp}) is not in $\mathbb{F}_q$ since $z_1 + z_2 \neq 0$.  We have a contradiction 
so no two vertices in $I$ are adjacent.  

%%%%%%%%%%%%%%%%%%%%%%%%%%%%%%%%%%%%%%%%%%%%%%%%%%%%%%

\section{Proof of Theorem \ref{non unitary orthogonal}}\label{division ring plane}

Let $p$ be an odd prime, $n \in \mathbb{N}$, and $q = p^{2n}$.  Let $\{1 , \lambda \}$ be a basis for a 2-dimensional 
vector space over $\mathbb{F}_q$.  Let $\sigma : \mathbb{F}_q \rightarrow \mathbb{F}_q$ be the map
$x^{\sigma} = x^{ p^n}$.  Observe that $\sigma $ is a field automorphism of order 2, and the fixed elements of $\sigma$ are precisely the elements of the subfield $\mathbb{F}_{p^n}$ in $\mathbb{F}_q$.  
Let $\theta$ be a generator of $\mathbb{F}_q^*$ which is the group of non-zero elements of $\mathbb{F}_q$ under multiplication.  
Let $D$ be the division ring whose elements are 
$\{ x + \lambda y : x , y \in \mathbb{F}_q \}$ where addition is done componentwise, and multiplication is given by the rule 
\[
(x + \lambda y ) \cdot (z + \lambda t) = xz + \theta t y^{ \sigma} + \lambda ( y z + x^{ \sigma} t ).
\]
Here we are following the presentation of \cite{hp}.  Define the map $\alpha : D \rightarrow D$ by 
\[
(x + \lambda y)^{ \alpha } = x +\lambda y^{\sigma}.
\]
Let $\Pi_D = ( \points , \lines , \incidence )$ be the plane coordinatized by $D$.  
That is,
\[
\points = \{ ( x , y ) :x , y \in D \} \cup \{ ( x) : x \in D \} \cup \{ ( \infty ) \}
\]
and 
\[
\lines = \{ [m,k] : m , k \in D \} \cup \{ [m] : m \in D \} \cup \{ [ \infty ] \}
\]
where 
\begin{equation*}
\begin{split}
[m,k] = \{ (x,y) : m \cdot x + y = k \} \cup \{ ( m ) \}, \\
[k] = \{ ( k , y ) : y \in D \} \cup \{ ( \infty ) \},  \\
[ \infty ] = \{ (m ) : m \in D \} \cup \{ ( \infty ) \}.
\end{split}
\end{equation*} 
The incidence relation $\incidence$ is containment.
A polarity of $\Pi_D$ is given by the map $\omega$ where 
\[
( \infty )^{ \omega } = [ \infty], ~~~ [ \infty ]^{ \omega } = ( \infty ),
~~~  (m)^{ \omega } = [ m^{ \alpha} ],~~~ [k]^{ \omega } = ( k^{\alpha } )
\]
and 
\[
(x,y)^{ \omega } = [ x^{ \alpha} , -y^{ \alpha } ] ,~~~~~ [m,k] = ( m^{ \alpha } , -k^{ \alpha } ).
\]
The polarity $\omega$ has $|D|^{5/4} + 1$ absolute points.  
Let $G ( \Pi_D , \omega)$ be the corresponding polarity graph.  

We now derive an algebraic condition for when the distinct vertices 
\begin{center}
$u=(x_1 +  \lambda x_2 , y_1 +  \lambda y_2 )$ and $v=(z_1  + \lambda z_2 , t_1 +  \lambda t_2 )$ 
\end{center}
are adjacent.  
The vertex $u$ is adjacent to $v$ if and only if $u \incidence v^{ \omega}$.  
This is equivalent to  
\[
(x_1 +  \lambda x_2 , y_1 +  \lambda y_2 ) \incidence 
[ z_1 + \lambda  (z_2)^{ \sigma} , - t_1 + \lambda  ( - (t_2)^{ \sigma } ) ]
\]
which in turn, is equivalent to 
\begin{equation}\label{eq1}
(z_1 +  \lambda (z_2)^{ \sigma } ) \cdot (x_1 + \lambda x_2  ) = - y_1 - t_1 + \lambda  (-y_2 - (t_2)^{ \sigma } ).
\end{equation}
Using the definition of multiplication in $D$, (\ref{eq1}) can be rewritten as
\[
 z_1 x_1 + \theta x_2 z_2 + \lambda (  (z_2)^{ \sigma} x_1 +  (z_1)^{ \sigma } x_2 ) 
= - y_1 - t_1 + \lambda  (-y_2 - (t_2)^{ \sigma } ) .
\]
This gives the pair of equations 
\begin{equation}\label{eq2}
x_1 z_1 + \theta x_2 z_2 = - y_1 - t_1
\end{equation}
and 
\[
x_1 (z_2)^{ \sigma } + x_2 ( z_1)^{ \sigma } = - y_2 - (t_2)^{ \sigma}.
\]

Let $\square_q$ be the set of nonzero squares in $\mathbb{F}_q$.  Note that any element of $\mathbb{F}_{p^n}$ is a square in 
$\mathbb{F}_q$.  
Define 
\[
I = \{ (x_1 +  \lambda x_2 , y_1 + \lambda y_2  ) : x_1 , y _1 \in \mathbb{F}_{p^n}, x_2 \in \square_q, y_2 \in \mathbb{F}_q \}.
\]
Then $|I|  = \frac{1}{2}(q-1)q(p^{n} )^2 = \frac{1}{2}q^2 ( q - 1)$.  We now show that $I$ is an independent set.  
Suppose that $(x_1 + x_2 \lambda , y_1 + y_2 \lambda )$ and $(z_1 + z_2 \lambda , t_1 + t_2 \lambda )$ are distinct vertices 
in $I$ that are adjacent.  Then (\ref{eq2}) holds so 
\begin{equation}\label{eq3}
\theta = ( x_2 z_2)^{-1} ( - y_1 - t_1 - x_1 z_1 ).
\end{equation}
The left hand side of (\ref{eq3}) is not a square in $\mathbb{F}_q$.  Since $x_2$ and $z_2$ belong to $\square_q$, 
we have that $(x_2 z_2)^{-1}$ is a square in $\mathbb{F}_q$.  Since $y_1, t_1 , x_1 , z_1 \in \mathbb{F}_{p^n}$, we have that 
$- y_1 - t_1 - x_1 z_1$ is in $\mathbb{F}_{p^n}$ and thus is a square in $\mathbb{F}_q$.  We conclude that the right hand side 
of (\ref{eq3}) is a square.  This is a contradiction and so $I$ must be an independent set.  This shows that 
\[
\alpha (  G ( \Pi_D , \omega ) ) \geq \frac{1}{2} q^2 ( q - 1).
\]

%%%%%%%%%%%%%%%%%%%%%%%%%%%%%%%%%%%%%%%%%%%%%%%%%%%%%%

\section{Proof of Theorem \ref{unitary}}

Let $p$ be an odd prime, $n \in \mathbb{N}$, and $q = p^{2n}$.  Let $\theta$ be a generator of $\mathbb{F}_q^*$.
The field $\mathbb{F}_q$ contains a subfield with $\sqrt{q}$ elements and we write 
$\mathbb{F}_{ \sqrt{q} }$ for this subfield.  We will use the fact that $x^{ \sqrt{q} }  = x$ if and only if  
$x \in \mathbb{F}_{ \sqrt{q} }$ and that the characteristic of $\mathbb{F}_q$ is a divisor of $\sqrt{q}$ without explicitly saying so.  

Let $U_q$ be the graph whose vertex set is $V(ER_q)$ and two vertices $(x_0 , x_1 , x_2)$ and $(y_0 , y_1, y_2)$ are adjacent if and only if 
\[
x_0 y_0^{ \sqrt{q} } + x_1 y_1^{ \sqrt{q} } + x_2 y_2^{ \sqrt{q} } = 0.
\]
In \cite{mw}, it is shown that $U_q$ has an independent set $J$ of size $q^3 + 1$.  This independent set consists of the absolute points in $U_q$; namely
\[
J= \{ (x_0 , x_1 , x_2) : x_0^{ \sqrt{q} + 1} + x_1^{ \sqrt{q} + 1} + x_2^{ \sqrt{q} + 1} = 0 \} .
\]

To find an independent set in $U_q$ with no absolute points and size $\Omega (q^{5/4})$, we will work with a graph that is isomorphic to $U_q$.  
Let $U_q^*$ be the graph whose vertex set is $V( ER_q)$ where $(x_0 , x_1 , x_2)$ and 
$(y_0 , y_1 , y_2)$ are adjacent if and only if 
\[
x_0 y_2^{ \sqrt{q}} + x_2 y_0^{ \sqrt{q}} = x_1 y_1^{ \sqrt{q}}.
\]
The proof of Proposition 3 of \cite{mw} is easily adapted to prove the following.  

\begin{lemma}\label{uq:lemma0}
The graph $U_q$ is isomorphic to the graph $U_q^*$.
\end{lemma}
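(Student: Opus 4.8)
The plan is to exhibit an explicit isomorphism $\phi : V(U_q) \to V(U_q^*)$ induced by a linear change of coordinates on the underlying vector space $\mathbb{F}_q^3$, and then verify that $\phi$ carries the adjacency relation of $U_q$ to that of $U_q^*$. Since both graphs share the vertex set $V(ER_q) = PG(2,q)$, the natural approach is to find an invertible matrix $M \in GL_3(\mathbb{F}_q)$ such that the Hermitian form defining $U_q$ is transformed into the form defining $U_q^*$. Concretely, $U_q$ is the orthogonality graph of the Hermitian form $B(x,y) = x_0 y_0^{\sqrt{q}} + x_1 y_1^{\sqrt{q}} + x_2 y_2^{\sqrt{q}}$, while $U_q^*$ is the orthogonality graph of $B'(x,y) = x_0 y_2^{\sqrt{q}} + x_2 y_0^{\sqrt{q}} - x_1 y_1^{\sqrt{q}}$. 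Both are nondegenerate Hermitian forms over $\mathbb{F}_q$ with respect to the involution $t \mapsto t^{\sqrt{q}}$, so a classical result guarantees they are equivalent; I just need to write down the equivalence.

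First I would set up the correspondence between graphs and Hermitian forms, noting that a vertex $p = (x_0,x_1,x_2)$ is adjacent to $p' = (y_0,y_1,y_2)$ in $U_q$ precisely when $B(p,p') = 0$, and similarly for $U_q^*$ and $B'$. The key algebraic step is to produce a matrix $M$ satisfying $\overline{M}^{\,T} A' M = A$, where $A = I$ is the matrix of $B$, $A'$ is the matrix of $B'$, and $\overline{M}$ denotes entrywise application of $t \mapsto t^{\sqrt{q}}$. A convenient choice is the standard diagonalization that turns the hyperbolic summand $x_0 y_2^{\sqrt{q}} + x_2 y_0^{\sqrt{q}}$ into a sum of two squares: setting new coordinates via $u_0 = x_0 + x_2$, $u_2 = c(x_0 - x_2)$ for a suitable constant $c \in \mathbb{F}_q$, together with an appropriate scaling of the middle coordinate to absorb the sign on $x_1 y_1^{\sqrt{q}}$. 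I would solve for $c$ and the middle scalar by matching coefficients; the constants exist because every element of $\mathbb{F}_q$ is a norm from $\mathbb{F}_q$ to $\mathbb{F}_{\sqrt{q}}$ (the norm map is surjective), which is exactly what lets one realize both $-1$ and the factor $\tfrac{1}{4}$ as values $t \cdot t^{\sqrt{q}}$.

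Once $M$ is in hand, I would define $\phi([p]) = [Mp]$ on projective points and check three things: that $\phi$ is well-defined on projective equivalence classes (immediate, since $M$ is linear and scaling commutes with the Frobenius up to a nonzero scalar), that $\phi$ is a bijection (immediate from $M \in GL_3$), and that $B'(Mp, Mp') = 0 \iff B(p,p') = 0$ (this is the identity $\overline{M}^{\,T} A' M = A$ unwound). The adjacency equivalence then follows, and since $\phi$ is a bijection on vertices preserving adjacency, it is a graph isomorphism.

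**The main obstacle** will be handling the projective subtlety in the semilinear setting: because the form is Hermitian rather than bilinear, scaling a vertex representative $p$ by $\lambda \in \mathbb{F}_q^*$ rescales $B(p,p')$ by $\lambda$ and rescales the second argument's contribution by $\lambda^{\sqrt{q}}$, so I must confirm that the \emph{vanishing} of the form—not its value—is what descends to $PG(2,q)$, and that this is respected by $\phi$. This is genuinely the point where one must be careful, but it is routine: the zero locus of a Hermitian form is projectively well-defined, so the adjacency relations on both sides are honestly defined on $PG(2,q)$, and a linear $M$ preserves the zero locus under the equivalence of forms. The remaining work is purely the coefficient-matching computation to exhibit $M$ explicitly, which the excerpt signals is a direct adaptation of Proposition 3 of \cite{mw} and therefore should be stated compactly rather than belabored.
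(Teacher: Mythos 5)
Your proposal is correct and takes essentially the paper's approach: the paper's entire proof is the remark that Proposition 3 of \cite{mw} is easily adapted, and that adaptation is exactly your argument --- an explicit projective-linear change of coordinates exhibiting the congruence of the two nondegenerate Hermitian forms, with well-definedness on $PG(2,q)$ coming from the fact that only the \emph{vanishing} of the form matters. One small correction to your justification: the image of the norm map $t \mapsto t\cdot t^{\sqrt{q}}$ is $\mathbb{F}_{\sqrt{q}}$, not all of $\mathbb{F}_q$; this still suffices because the constants you must realize as norms (such as $-1$ and $\tfrac{1}{4}$, equivalently $-2$) lie in the prime field and hence in $\mathbb{F}_{\sqrt{q}}$, where surjectivity of the norm applies.
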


For any $\mu \in \mathbb{F}_q \backslash \mathbb{F}_{ \sqrt{q} }$, we have 
$\mathbb{F}_q = \{ a + b \mu : a , b \in \mathbb{F}_{ \sqrt{q} } \}$.  The next lemma shows that we can find a $\mu$ that 
makes many of our calculations significantly easier.  

\begin{lemma}\label{uq:lemma1}
There is a $\mu \in \mathbb{F}_q \backslash \mathbb{F}_{ \sqrt{q} }$ such that $\mu^{ \sqrt{q} } + \mu = 0$.
\end{lemma}
\begin{proof}
Let $\mu = \theta^{ \frac{1}{2} ( \sqrt{q} + 1)}$.
Since $\mathbb{F}_{ \sqrt{q} }^* = \langle \theta^{ \sqrt{q} + 1} \rangle$, we have that 
$\mu \notin \mathbb{F}_{ \sqrt{q}}$.  Using the fact that $-1 = \theta^{ \frac{1}{2} (q -1) }$, we find that 
\begin{eqnarray*}
\mu^{ \sqrt{q} } + \mu &=& \theta^{ \frac{1}{2} \sqrt{q} ( \sqrt{q} + 1) } + \theta^{ \frac{1}{2} ( \sqrt{q} + 1) } 
 =  \theta^{ \frac{1}{2} \sqrt{q} ( \sqrt{q} + 1) } - \theta^{ \frac{1}{2} (q - 1)  + \frac{1}{2} ( \sqrt{q} + 1) } \\ 
& = & \theta^{ \frac{1}{2} ( q + \sqrt{q} ) } - \theta^{ \frac{1}{2} (q + \sqrt{q} ) } = 0.
\end{eqnarray*}
\end{proof}

\bigskip

For the rest of this section we fix a $\mu \in \mathbb{F}_q \backslash \mathbb{F}_{ \sqrt{q}}$ that satisfies the statement of Lemma \ref{uq:lemma1}.
Given $c \in \mathbb{F}_{ \sqrt{q} }$, define
\[
X_c = \{ ( 1 , a  , b + c \mu )  : a , b \in \mathbb{F}_{ \sqrt{q} } \}.
\]

\begin{lemma}\label{uq:lemma2}
If $c_1$ and $c_2$ are elements of $\mathbb{F}_{ \sqrt{q} }$ with $c_1 \neq c_2$, then the graph $U_q^*$ has no edge with one 
endpoint in $X_{c_1}$ and the other in $X_{c_2}$.
\end{lemma}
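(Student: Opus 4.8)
The plan is to take arbitrary vertices $u = (1, a_1, b_1 + c_1 \mu) \in X_{c_1}$ and $v = (1, a_2, b_2 + c_2\mu) \in X_{c_2}$, assume that $\{u,v\}$ is an edge of $U_q^*$, and derive that $c_1 = c_2$. Since every vertex in these sets has first coordinate $1$, all of them are genuine distinct projective points, so there is nothing to check about degeneracy. The adjacency relation of $U_q^*$ is symmetric (raising the defining equation to the $\sqrt{q}$ power and using $x^q = x$ interchanges the roles of the two points), so it suffices to write the defining equation with $u$ in the role of $(x_0, x_1, x_2)$ and $v$ in the role of $(y_0, y_1, y_2)$.

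First I would substitute into the adjacency condition $x_0 y_2^{\sqrt{q}} + x_2 y_0^{\sqrt{q}} = x_1 y_1^{\sqrt{q}}$. The simplifications rest on two structural facts already available: every $a_i, b_i, c_i$ lies in $\mathbb{F}_{\sqrt{q}}$ and is therefore fixed by the Frobenius map $x \mapsto x^{\sqrt{q}}$, and the element $\mu$ chosen via Lemma \ref{uq:lemma1} satisfies $\mu^{\sqrt{q}} = -\mu$. Because the Frobenius is a field automorphism, I can expand $(b_2 + c_2\mu)^{\sqrt{q}} = b_2 + c_2\mu^{\sqrt{q}} = b_2 - c_2\mu$, while $a_2^{\sqrt{q}}$ collapses to $a_2$ and each $1^{\sqrt{q}}$ to $1$.

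After these reductions the adjacency equation becomes $(b_1 + b_2) + (c_1 - c_2)\mu = a_1 a_2$. Now I would invoke the fact that $\{1, \mu\}$ is an $\mathbb{F}_{\sqrt{q}}$-basis of $\mathbb{F}_q$: the right-hand side $a_1 a_2$ lies in $\mathbb{F}_{\sqrt{q}}$, so its $\mu$-coordinate is $0$, whereas the $\mu$-coordinate of the left-hand side is $c_1 - c_2$. Comparing coordinates forces $c_1 - c_2 = 0$, that is $c_1 = c_2$, contradicting the hypothesis $c_1 \neq c_2$; hence no such edge can exist.

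There is essentially no hard step here, since the whole content has been packaged into the earlier choice of $\mu$. The one place to be careful is the Frobenius expansion of $b_2 + c_2\mu$ and the bookkeeping that leaves the right-hand side inside $\mathbb{F}_{\sqrt{q}}$; once $\mu^{\sqrt{q}} = -\mu$ is applied, the $\mu$-components separate cleanly and the conclusion is immediate.
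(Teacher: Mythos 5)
Your proof is correct and is essentially identical to the paper's: both substitute the two vertices into the defining equation of $U_q^*$, use that the Frobenius map fixes $\mathbb{F}_{\sqrt{q}}$ together with $\mu^{\sqrt{q}} = -\mu$ from Lemma \ref{uq:lemma1}, and then compare $\mu$-components over the basis $\{1,\mu\}$ to force $c_1 = c_2$. Your extra remark verifying the symmetry of the adjacency relation is a harmless addition the paper omits.
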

\begin{proof}
Suppose that $(1, a_1  , b_1 + c_1 \mu )$ is adjacent to 
$(1 , a_2  , b_2 + c_2 \mu )$ where $a_i , b_i , c_i \in \mathbb{F}_{ \sqrt{q} }$.  By definition of adjacency in $U_q^*$, 
\[
b_2 + c_2 \mu^{ \sqrt{q} } + b_1 + c_1 \mu = a_1 a_2 .
\]
By Lemma \ref{uq:lemma1}, this can be rewritten as 
\begin{equation}\label{uq:lemma2:eq1}
(c_1 - c_2 ) \mu = a_1 a_2  - b_1 - b_2.
\end{equation}
The right hand side of (\ref{uq:lemma2:eq1}) belongs to the subfield $\mathbb{F}_{ \sqrt{q} }$.  Therefore, $c_1 - c_2 = 0$ 
since $\mu \notin \mathbb{F}_{ \sqrt{q} }$.  
\end{proof}

\bigskip

Now we consider the subgraph $U_q^* [ X_c ]$ where $c \in \mathbb{F}_{ \sqrt{q} }$.  The vertex set of 
$U_q^* [ X_c]$ is 
\[
\{ ( 1 , a  , b + c \mu ) : a , b  \in \mathbb{F}_{ \sqrt{q} } \}
\]
and two vertices $(1 , a_1  , b_1 + c \mu )$ and $(1 , a_2  , b_2  + c \mu )$ are adjacent if and only if 
\[
b_2 + c( \mu^{ \sqrt{q} } + \mu ) + b_1 = a_1 a_2  .
\]
By Lemma \ref{uq:lemma1}, this is equivalent to 
\begin{equation}\label{uq:adj1}
b_2 -  a_1 a_2 + b_1 = 0.
\end{equation}

Let $ER_{ \sqrt{q}}^*$ be the graph whose vertex set is $V( ER_{ \sqrt{q} } )$ and 
$(x_0 , x_1 , x_2)$ is adjacent to $(y_0, y_1 , y_2)$ if and only if 
\[
x_0 y_2 -  x_1 y_1 + x_2 y_0 = 0.
\]
Proposition 3 of \cite{mw} shows that $ER_{ \sqrt{q} }^*$ is isomorphic to $ER_{ \sqrt{q} }$.  It follows from (\ref{uq:adj1}) that the graph $U_q^* [ X_c ]$ is isomorphic to the subgraph of $ER_{ \sqrt{q}}^*$ induced by 
$\{ ( 1 , x_1 , x_2 ) : x_1 , x_2 \in \mathbb{F}_{ \sqrt{q} } \}$.  Note that $ER_{ \sqrt{q}}^*$ has exactly 
$\sqrt{q} + 1$ vertices more than $U_q^* [X_c]$.  By Theorem 5 of \cite{mw}, we can find an independent set 
in $U_q^* [ X_c ]$ with at least $.19239q^{3/4} - q^{1/2} - 1$ vertices.  Call this independent set $I_c$.  

We want to throw away the absolute points in $U_q^*$ that are in $I_c$.  
In $U_q^* [X_c]$, the vertex $(1,a  , b + c \mu )$ is an absolute point if and only if 
\[
b  + c \mu + b + c \mu^{ \sqrt{q} } = a^2 
\]
which, again by Lemma \ref{uq:lemma1}, is equivalent to 
\[
2b =  a^2.
\]
There are $\sqrt{q}$ choices for $a$ and a given $a$ uniquely determines $b$.  Thus $I_c$ contains at most 
$q^{1/2}$ absolute points in $U_q$.  Let $J_c$ be the set $I_c$ with the absolute points removed so that 
$|J_c| \geq .19239 q^{3/4}  -2 q^{1/2}-1$.
 
Define 
\[
I = \bigcup_{ c \in \mathbb{F}_{ \sqrt{q} } } J_c.
\]
By Lemma \ref{uq:lemma2}, $I$ is an independent set in $U_q^*$.  Observe that 
\[
|I| \geq  \sqrt{q} ( 0.19239q^{3/4} - 2q^{1/2} - 1) = 0.19239q^{5/4} - O ( q )
\]
and $I$ contains no absolute points.   

We note that when $q$ is a fourth power, the coefficient $0.19239$ may be raised to $\frac{1}{2}$, as Theorem 5 in \cite{mw} is stronger in this case.

%%%%%%%%%%%%%%%%%%%%%%%%%%%%%%%%%%%%%%%%%%%%%%%%%%%%%%

\section{Proof of Theorem \ref{th1}}\label{coloring 1}

Let $s$ and $n$ be positive integers with $\frac{2n}{s} = r \geq 3$ an odd integer.  Let $q = p^n$, $d = p^s$, and note 
that $\{q , d \}$ is an admissible pair.  
Let $\mathbb{F}_{q^2}^*$ be the non-zero elements of $\mathbb{F}_{q^2}$ and $\theta$ be a generator of the cyclic group $\mathbb{F}_{q^2}^*$.  Write $\mathbb{F}_q$ and $\mathbb{F}_d$ for the unique 
subfields of $\mathbb{F}_{q^2}$ of order $q$ and $d$, respectively.   
An identity that will be used is 
\[
\frac{ p^{2n} - 1}{p^s - 1} = (p^{s})^{r-1} + (p^s )^{r-2} + \dots + p^s + 1.
\]
It will be convenient to let 
\begin{equation}\label{def of t}
t = \frac{ p^{2n} - 1}{p^s - 1}
\end{equation}
and observe that $t$ is odd since $r  = \frac{2n}{s}$ is odd. 

\begin{lemma}\label{lemma1}
There is a $\mu \in \mathbb{F}_{q^2} \backslash \mathbb{F}_q$ such that when $\mu^d$ is written in the form 
$\mu^d = u_1 + u_2 \mu$ with $u_1 , u_2 \in \mathbb{F}_q$, the element $u_2$ is a $(d-1)$-th power.
\end{lemma}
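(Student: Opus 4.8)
The plan is to exhibit an explicit candidate for $\mu$ built from the generator $\theta$ of $\mathbb{F}_{q^2}^*$, and then verify the two required properties: that $\mu \notin \mathbb{F}_q$, and that the coefficient $u_2$ in the expansion $\mu^d = u_1 + u_2 \mu$ is a $(d-1)$-th power. First I would recall that $\mathbb{F}_q^* = \langle \theta^{q+1} \rangle$, so an element $\theta^j$ lies outside $\mathbb{F}_q$ precisely when $q+1 \nmid j$; this gives a clean divisibility criterion for the first property. The natural first attempt is to take $\mu = \theta^i$ for a carefully chosen exponent $i$, so that the Frobenius-type relation $\mu^d = \theta^{id}$ can be tracked through the basis $\{1, \mu\}$ of $\mathbb{F}_{q^2}$ over $\mathbb{F}_q$.

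The key computation is to understand the map $x \mapsto x^d$ on $\mathbb{F}_{q^2}$ and its interaction with the $\mathbb{F}_q$-linear structure. Since $d = p^s$, the map $x \mapsto x^d$ is a field automorphism, and because $\tfrac{2n}{s}$ is odd while $d^{2n/s} = p^{2n} = q^2$, one checks that $d$ does not fix $\mathbb{F}_q$ pointwise in general, so $\mu^d$ genuinely involves the $\mu$-component. I would write $\mu = u_1 + u_2\mu$ after expanding $\mu^d$ and isolate $u_2$ explicitly in terms of powers of $\theta$. The goal is to arrange that $u_2$ equals a perfect $(d-1)$-th power in $\mathbb{F}_q^*$; using the cyclic structure, $u_2 = \theta^{(q+1)k}$ is a $(d-1)$-th power exactly when the subgroup of $(d-1)$-th powers, which is $\langle \theta^{(q+1)\gcd(q-1,\,d-1)} \rangle$ inside $\mathbb{F}_q^*$, contains it — equivalently when $\gcd(q-1,d-1)$ divides $k$. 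So the real arithmetic content is a congruence condition on the exponent of $u_2$ modulo $\gcd(q-1, d-1)$.

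The hard part will be controlling $u_2$'s exponent precisely enough to force the divisibility $\gcd(q-1,d-1) \mid k$, since $u_2$ arises from subtracting two terms and I have only the $\mathbb{F}_q$-bilinear data of the basis to work with. I expect the cleanest route is to avoid solving for $u_2$ in closed form and instead use a counting or averaging argument: as $\mu$ ranges over a suitable coset or over all of $\mathbb{F}_{q^2} \setminus \mathbb{F}_q$, the induced values of $u_2$ should be equidistributed enough among residue classes (or among square/power classes) that at least one choice lands in the subgroup of $(d-1)$-th powers. Concretely, I would show the set of admissible $\mu$ is nonempty by bounding the number of $\mu$ for which $u_2$ fails to be a $(d-1)$-th power, and checking this count is strictly less than $|\mathbb{F}_{q^2} \setminus \mathbb{F}_q| = q^2 - q$. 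The fact that $t = \tfrac{q^2-1}{d-1}$ is odd, emphasized just before the lemma, strongly suggests the intended proof pins down the $(d-1)$-th power condition via this parity, so I would look to relate $u_2$ to $\theta^{t \cdot (\text{something})}$ and exploit that $t$ being odd makes certain power residues coincide with square residues in a way that can be arranged by a single explicit exponent choice rather than a counting argument.
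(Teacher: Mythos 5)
Your framework is sensible --- writing $\mu = \theta^i$, using $\mathbb{F}_q^* = \langle \theta^{q+1}\rangle$ so that $\theta^i \notin \mathbb{F}_q$ iff $(q+1) \nmid i$, and reducing the power condition to exponent divisibility --- but the proposal stops short of the proof: you never exhibit an exponent $i$ that works, and the counting fallback you lean on is not carried out and, as stated, cannot work. To see why, note that applying the Frobenius $x \mapsto x^q$ to $\mu^d = u_1 + u_2\mu$ (using $u_1, u_2 \in \mathbb{F}_q$) gives $u_2 = (\mu^d - \mu^{dq})/(\mu - \mu^q)$, a rational function of $\mu$ of degree roughly $dq$. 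The $(d-1)$-th powers occupy only a $1/(d-1)$ proportion of $\mathbb{F}_{q^2}^*$, so the main term in any character-sum count of good $\mu$ is about $(q^2-q)/(d-1)$, while the Weil-type error term for each nontrivial character of order dividing $d-1$ is of size about $dq \cdot q$, which exceeds even the trivial bound $q^2$. Parametrizing $\mu$ over a fixed basis and invoking two-variable (Deligne-type) bounds does not rescue this either: admissible pairs allow $d$ as large as $q^{2/3}$ (when $2n/s = 3$), and then the error terms still swamp the main term $q^2/(d-1)$. So ``the bad set has size less than $q^2-q$'' is not obtainable by equidistribution here. A smaller point: your criterion $\gcd(q-1,d-1) \mid k$ characterizes $(d-1)$-th powers \emph{of elements of} $\mathbb{F}_q^*$, which is stronger than what Lemma \ref{lemma2} actually uses (a $(d-1)$-th power in $\mathbb{F}_{q^2}^*$); aiming for the stronger condition is legitimate, but the two should not be conflated.

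The single missing idea, which makes all power-residue arithmetic evaporate, is to force $u_1 = 0$: choose $\mu \notin \mathbb{F}_q$ with $\mu^{d-1} \in \mathbb{F}_q$. Then $\mu^d = (\mu^{d-1})\mu$, so $u_2 = \mu^{d-1}$ is a $(d-1)$-th power \emph{by construction}, with no congruence condition left to verify. In exponents, $\mu = \theta^i$ works iff $(q+1) \mid i(d-1)$ while $(q+1) \nmid i$. The paper's choice is $i = q+1+t$ with $t = (q^2-1)/(d-1)$: then $i(d-1) = (q+1)(d-1) + (q^2-1)$ is divisible by $q+1$, so $u_1 = 0$ and $u_2 = (\theta^{q+1})^{d-1} \in \mathbb{F}_q^*$; and $(q+1) \nmid i$ because $i \equiv t \pmod{q+1}$, $t$ is odd, and $q+1$ is even, so $q+1$ cannot divide $t$. (The paper packages this computation by exhibiting $\mu$ as a root of the polynomial $X^d - (\theta^{q+1})^{d-1}X$, whose roots it determines explicitly.) In particular, you guessed the role of the oddness of $t$ incorrectly: it is used only to certify $\mu \notin \mathbb{F}_q$, not to match $(d-1)$-th power residues with quadratic residues.
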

\begin{proof}
Let $h(X) = X^d + ( \theta^{q + 1} )^{d - 1} X$.  We claim that the roots of $h$ are the elements in the set
$Z=\{ 0 \} \cup \{  \theta^{ q + 1 + i t} : 0 \leq i \leq d - 2 \}$.  
Clearly 0 is a root.  Let $0 \leq i \leq d - 2$.  Note that since $2n = sr$,
\begin{eqnarray*}
dt & \equiv & p^s \left( (p^s)^{r-1} + (p^s)^{r-2} + \dots + p^s + 1\right) \\
&  \equiv &p^{2n} + (p^s)^{r-1} + \dots + p^{2s}  + p^s  \\
& \equiv & 1 + (p^s)^{r-1} + \dots + p^{2s}  + p^s \equiv t (\textup{mod}~p^{2n} - 1).
\end{eqnarray*}
This implies $d( q+ 1 + it) \equiv (q + 1)(d-1) + (q + 1)  + it ( \textup{mod}~q^2 - 1)$
so that 
\[
( \theta^{ q + 1 + it })^d - ( \theta^{q+1})^{d-1} \theta^{q+1 + it} = 0.
\]
We conclude that the roots of $h$ are the elements in $Z$.  

Let $\mu = \theta^{q+1 + t}$.  The non-zero elements of the subfield $\mathbb{F}_{q}$ are the elements 
of the subgroup $\langle \theta^{q+1} \rangle$ in $\mathbb{F}_{q^2}^*$.  Since $t$ is odd and $q+1$ is even, 
$q+1 + t$ is not divisible by $q+1$ thus $\mu \notin \mathbb{F}_q$.  Let $u_2 = ( \theta^{ q + 1} )^{d-1}$ and $u_1 = 0$.
We have 
\[
0  = h( \mu) = \mu^d - ( \theta^{q+1})^{d-1} \mu = \mu^d - u_1 - u_2 \mu
\]
so $\mu^d = u_1 + u_2 \mu$.  By construction, $\mu \in \mathbb{F}_{q^2} \backslash \mathbb{F}_q$, 
$\mu^d = u_1 + u_2 \mu$ with $u_1, u_2 \in \mathbb{F}_{q}$, and $u_2$ is a $(d-1)$-th power.  
\end{proof}
  
\bigskip

The next lemma is known (see Exercise 7.4 in \cite{ln}).  A proof is included for completeness.  

\begin{lemma}\label{lemma2}
If $u_2 , \delta \in \mathbb{F}_{q^2}^*$ and $u_2$ is a $(d-1)$-th power, then for 
any $\xi \in \mathbb{F}_{q^2}$, the equation 
\[
X^d + u_2  \delta^{d-1} X = \xi
\]
has a unique solution in $\mathbb{F}_{q^2}$.  
\end{lemma}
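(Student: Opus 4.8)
The plan is to read the left-hand side $L(X) = X^d + u_2 \delta^{d-1} X$ as an additive polynomial and reduce the claim to a statement about its kernel. Since $d = p^s$, the map $X \mapsto X^d$ is an iterate of the Frobenius automorphism and hence $\mathbb{F}_p$-linear, so $L$ is an $\mathbb{F}_p$-linear endomorphism of $\mathbb{F}_{q^2}$ regarded as a finite-dimensional vector space over $\mathbb{F}_p$. For such a map, the assertion that $L(X) = \xi$ has a unique solution for every $\xi \in \mathbb{F}_{q^2}$ is equivalent to $L$ being a bijection, which for a linear endomorphism of a finite-dimensional space is in turn equivalent to $\ker L = \{0\}$. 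Thus it suffices to show that the only solution of $X^d + u_2 \delta^{d-1} X = 0$ in $\mathbb{F}_{q^2}$ is $X = 0$.

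Next I would analyze this homogeneous equation. Suppose $X \neq 0$ is a solution; dividing by $X$ gives $X^{d-1} = -u_2 \delta^{d-1}$. Here I would invoke the hypothesis that $u_2$ is a $(d-1)$-th power: write $u_2 = w^{d-1}$ for some $w \in \mathbb{F}_{q^2}^*$, which is possible because $u_2 \neq 0$. Setting $Y = X(w\delta)^{-1} \in \mathbb{F}_{q^2}^*$, the relation becomes $Y^{d-1} = -1$. So everything reduces to showing that $Y^{d-1} = -1$ has no solution in $\mathbb{F}_{q^2}^*$.

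The crux is then to exploit that $t = (q^2-1)/(d-1)$ is odd. Recall from (\ref{def of t}) and the remark following it that $t = \frac{p^{2n}-1}{p^s-1}$ is odd precisely because $r = \frac{2n}{s}$ is odd; equivalently, $q^2 - 1 = (d-1)t$ with $t$ odd. Raising both sides of $Y^{d-1} = -1$ to the $t$-th power and using $Y^{(d-1)t} = Y^{q^2-1} = 1$ for $Y \in \mathbb{F}_{q^2}^*$, I obtain $1 = (-1)^t = -1$ since $t$ is odd. As $p$ is odd, $1 \neq -1$ in $\mathbb{F}_{q^2}$, a contradiction. Hence no nonzero $X$ solves the homogeneous equation, $\ker L$ is trivial, and the original equation has a unique solution for every $\xi$.

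The substantive content lies entirely in the last step: the hypothesis that $u_2$ is a $(d-1)$-th power converts the kernel condition into the solvability of $Y^{d-1} = -1$, and the oddness of $t$ (equivalently, that $v_2(d-1) = v_2(q^2-1)$) is exactly what rules this out. The reduction to linear algebra is routine, so I expect the only place demanding care is to verify that both hypotheses — $u_2$ being a $(d-1)$-th power and $t$ being odd — are genuinely used and correctly combined, rather than either being superfluous.
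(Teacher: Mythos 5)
Your proof is correct and follows essentially the same route as the paper's: reduce to showing the additive polynomial $X^d + u_2\delta^{d-1}X$ has only the root $0$, then use $u_2 = w^{d-1}$ to reduce to the insolubility of $Y^{d-1} = -1$. The only difference is that you prove inline the two ingredients the paper outsources --- the paper cites Theorem 7.9 of Lidl--Niederreiter for ``additive permutation polynomial iff trivial kernel'' (your linear-algebra argument) and simply asserts that $-1$ is not a $(d-1)$-th power, which your oddness-of-$t$ computation justifies.
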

\begin{proof}
Let $u_2, \delta \in \mathbb{F}_{q^2}^*$ and $g(X) = X^d + u_2 \delta^{d-1} X$.
The polynomial $g$ is a permutation polynomial if and only if the only root of $g$ is 0 (see Theorem 7.9 of \cite{ln}).  
If $g(X) = 0$, then $X ( X^{d-1} + u_2 \delta^{d-1} ) = 0$.
It suffices to show that $-u_2 \delta^{d-1}$ is not a $(d-1)$-th power of any element of $\mathbb{F}_{q^2}$ as this would imply that the equation $X^{d-1} + u_2 \delta^{d-1}=0$ has no solutions.  By hypothesis, $u_2 = w^{d-1}$ for some 
$w \in \mathbb{F}_{q^2}^*$.  Since $-1$ is not a $(d-1)$-th power, the product 
$- u_2 \delta^{d-1} = - (w \delta )^{d-1}$ is not a $(d-1)$-th power.  We conclude that 
$g$ is a permutation polynomial on $\mathbb{F}_{q^2}$.  In particular, given any $\xi \in \mathbb{F}_{q^2}$, there is a unique solution to the equation $X^d + u_2 \delta^{d-1} X = \xi$.    
\end{proof}

\bigskip

For the rest of this section, we fix a $\mu \in \mathbb{F}_{q^2} \backslash \mathbb{F}_q$ that satisfies the conclusion of 
Lemma \ref{lemma1}; that is, 
\[
\mu^d = u_1 + u_2 \mu
\]
where $u_1, u_2 \in \mathbb{F}_q$ and $u_2$ is a $(d-1)$-th power in $\mathbb{F}_{q^2}$.  Let 
\[
\mu^{d+1} = w_1 + w_2 \mu
\]
where $w_1, w_2 \in \mathbb{F}_q$.    
We fix a partition of $\mathbb{F}_q^*$ into two sets 
\begin{equation}\label{partition}
\mathbb{F}_q^* = \mathbb{F}_q^+ \cup \mathbb{F}_q^-
\end{equation}
where $a \in \mathbb{F}_q^+$ if and only if $-a \in \mathbb{F}_q^-$.  

It will be convenient to work with a graph that is isomorphic to a large induced subgraph of $G_f$.  By Lemma 
\ref{lemma2}, the map $x \mapsto x^d + x$ is a permutation on $\mathbb{F}_{q^2}$.  Therefore, 
every element of $\mathbb{F}_{q^2}$ can be written in the form $a^d + a$ for some $a \in \mathbb{F}_{q^2}$ and this 
representation is unique.  
Let $\mathcal{A}_{q^2 , d}$ be the graph with vertex set $\mathbb{F}_{q^2} \times \mathbb{F}_{q^2}$ 
where distinct vertices $(a^d + a , x )$ and $(b^d + b , y)$ are adjacent if and only if 
\[
a^d b + a b^d = x + y.
\]
Working with this equation defining our adjacencies will be particularly helpful for the rather technical Lemma \ref{lemma7}
below.  

\begin{lemma}\label{iso}
The graph $\mathcal{A}_{q^2,d}$ is isomorphic to the subgraph of $G_f$ induced by $\mathbb{F}_{q^2} \times \mathbb{F}_{q^2}$.
\end{lemma}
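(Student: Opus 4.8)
The plan is to exhibit an explicit isomorphism, which turns out to be a simple shear of the second coordinate. The starting point is the adjacency rule in the induced subgraph of $G_f$: with $f(X) = X^{d+1}$, two affine points $(X_1 , Y_1)$ and $(X_2 , Y_2)$ are adjacent precisely when $(X_1 + X_2)^{d+1} = Y_1 + Y_2$. The one algebraic fact I need is that $d = p^s$ is a power of the characteristic $p$, so the Frobenius identity $(u+v)^d = u^d + v^d$ holds, and hence
\[
(u+v)^{d+1} = u^{d+1} + v^{d+1} + u^d v + u v^d
\]
for all $u , v \in \mathbb{F}_{q^2}$. The diagonal terms $u^{d+1}$ and $v^{d+1}$ are exactly what the shear is designed to absorb.

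Concretely, I would define a map $\phi$ on $V( \mathcal{A}_{q^2 , d})$ as follows. As already noted before the statement, Lemma \ref{lemma2} shows that $a \mapsto a^d + a$ is a permutation of $\mathbb{F}_{q^2}$, so every vertex of $\mathcal{A}_{q^2 , d}$ is uniquely of the form $(a^d + a , x)$ and the element $a$ is recovered from the first coordinate. Set
\[
\phi( a^d + a , x) = (a , x + a^{d+1} ).
\]
This is well defined by the uniqueness just recalled, and it is a bijection of $\mathbb{F}_{q^2} \times \mathbb{F}_{q^2}$ onto itself, since $(a , x) \mapsto (a , x + a^{d+1})$ is a bijection and $a \mapsto a^d + a$ is onto $\mathbb{F}_{q^2}$.

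It then remains to check that $\phi$ preserves adjacency. The vertices $(a^d + a , x)$ and $(b^d + b , y)$ are adjacent in $\mathcal{A}_{q^2 , d}$ if and only if $a^d b + a b^d = x + y$. Their images $(a , x + a^{d+1})$ and $(b , y + b^{d+1})$ are adjacent in the induced subgraph of $G_f$ if and only if $(a + b)^{d+1} = (x + a^{d+1}) + (y + b^{d+1})$; expanding the left-hand side by the Frobenius identity above and cancelling $a^{d+1} + b^{d+1}$ from both sides reduces this to $a^d b + a b^d = x + y$, the same condition. Since $\phi$ is a bijection it carries distinct vertices to distinct vertices, so the two simple graphs are isomorphic.

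I expect the only place requiring care to be the bookkeeping rather than the computation: one must invoke Lemma \ref{lemma2} to guarantee that the first coordinate $a^d + a$ determines $a$, so that $\phi$ is single-valued, and one must choose the shift in the second coordinate to be exactly $a^{d+1}$ so that the diagonal terms of the expansion cancel. Once these two points are in place, the verification of adjacency is routine.
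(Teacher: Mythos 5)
Your proposal is correct and uses exactly the map the paper uses, namely $\tau((a^d+a,x)) = (a, x+a^{d+1})$; the paper simply states that one "easily verifies" this is an isomorphism, while you have written out that verification (Frobenius expansion of $(a+b)^{d+1}$ and cancellation of the diagonal terms) explicitly. No differences in approach, and your appeal to Lemma \ref{lemma2} for the bijectivity of $a \mapsto a^d + a$ matches the paper's own setup.
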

\begin{proof}
One easily verifies that the map $\tau : V( \mathcal{A}_{q^2 , d} ) \rightarrow \mathbb{F}_{q^2} \times \mathbb{F}_{q^2}$ defined by 
\[
\tau ( ( a^d + a , x) ) = (a , x + a^{d+1} )
\]
is a graph isomorphism from $\mathcal{A}_{q^2 ,d}$ to the subgraph of $G_f$ induced by $\mathbb{F}_{q^2} \times 
\mathbb{F}_{q^2}$.  
\end{proof}

\begin{lemma}\label{lemma3}
If 
\[
I^+ = \{ ( a^d + a , x_1 + x_2 \mu ) : a , x_1 \in \mathbb{F}_q , x_2 \in \mathbb{F}_q^+ \},
\]
then $I^+$ is an independent set in the graph $\mathcal{A}_{q^2,d}$.  
The same statement holds with $I^-$ and $\mathbb{F}_q^-$ in place of $I^+$ and $\mathbb{F}_q^+$, respectively.
\end{lemma}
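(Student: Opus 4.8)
The plan is to mimic the proof of Theorem \ref{th0}: I assume that two distinct vertices of $I^+$ are adjacent, translate this into the defining equation of $\mathcal{A}_{q^2,d}$, and derive a contradiction by comparing the $\mu$-components of the two sides with respect to the basis $\{1,\mu\}$ of $\mathbb{F}_{q^2}$ over $\mathbb{F}_q$. (Recall that $\mu \in \mathbb{F}_{q^2}\backslash\mathbb{F}_q$, so $\{1,\mu\}$ is indeed a basis and every element of $\mathbb{F}_{q^2}$ has a unique expression $x_1 + x_2\mu$ with $x_1, x_2 \in \mathbb{F}_q$.)

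First I would take two distinct vertices of $I^+$, say $(a^d + a, x_1 + x_2\mu)$ and $(b^d + b, y_1 + y_2\mu)$ with $a, b, x_1, y_1 \in \mathbb{F}_q$ and $x_2, y_2 \in \mathbb{F}_q^+$, and suppose they are adjacent. By the adjacency rule defining $\mathcal{A}_{q^2,d}$, this says
\[
a^d b + a b^d = (x_1 + y_1) + (x_2 + y_2)\mu.
\]

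The key step is to observe that the left-hand side lies in the subfield $\mathbb{F}_q$. Indeed, $d = p^s$, so $x \mapsto x^d$ is the $s$-th power of the Frobenius automorphism of $\mathbb{F}_{q^2}$ and therefore maps the subfield $\mathbb{F}_q = \mathbb{F}_{p^n}$ into itself; hence $a^d, b^d \in \mathbb{F}_q$, and since $a, b \in \mathbb{F}_q$ the whole expression $a^d b + a b^d$ lies in $\mathbb{F}_q$. Writing both sides in the basis $\{1, \mu\}$ over $\mathbb{F}_q$ and comparing the coefficients of $\mu$, the left-hand side contributes $0$, so $x_2 + y_2 = 0$. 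But $x_2, y_2 \in \mathbb{F}_q^+$, and by the definition of the partition (\ref{partition}) the relation $x_2 + y_2 = 0$ forces $y_2 = -x_2 \in \mathbb{F}_q^-$, contradicting $y_2 \in \mathbb{F}_q^+$. Hence no two vertices of $I^+$ are adjacent, and $I^+$ is independent.

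Finally, the argument for $I^-$ is identical, since the only property of $\mathbb{F}_q^+$ that is used is that it contains no pair $\{x_2, -x_2\}$, and this is equally true of $\mathbb{F}_q^-$. The one point that requires any care is the closure fact that $a^d \in \mathbb{F}_q$ whenever $a \in \mathbb{F}_q$; once this is in hand the whole statement reduces to the same $\mu$-coefficient comparison as in Theorem \ref{th0}, so I do not anticipate a genuine obstacle.
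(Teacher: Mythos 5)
Your proof is correct and follows essentially the same argument as the paper: assume adjacency, note that $a^d b + ab^d \in \mathbb{F}_q$, compare coefficients of $\mu$ to get $x_2 + y_2 = 0$, and contradict the definition of $\mathbb{F}_q^+$. The only difference is that you spell out the Frobenius closure fact $a^d \in \mathbb{F}_q$ for $a \in \mathbb{F}_q$, which the paper leaves implicit.
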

\begin{proof}
Suppose that $(a^d + a , x_1 + x_2 \mu )$ is adjacent to $( b^d + b , y_1 + y_2 \mu)$
where $a, b \in \mathbb{F}_q$.  The left hand side of 
\[
a^d b + a b^d = (x_1 + y_1) + (x_2 + y_2) \mu
\]
is in $\mathbb{F}_q$ so $x_2 + y_2 = 0$.  If $x_2 , y_2 \in \mathbb{F}_q^+$,
 then $x_2 + y_2 \neq 0$ and so no two vertices in $I^+$ can be adjacent.
Similarly, no two vertices in $I^-$ can be adjacent.   
\end{proof}

\begin{lemma}\label{lemma4}
For any $k = \alpha^d + \alpha \in \mathbb{F}_{q^2}$, the map 
\[
\phi_k ( (a^d + a , x) ) = ( a^d + a + k , x + a^d \alpha + a \alpha^d + \alpha^{d+1} )
\]
is an automorphism of the graph $\mathcal{A}_{q^2,d}$.  
\end{lemma}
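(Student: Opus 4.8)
The plan is to verify directly that $\phi_k$ is a bijection on $\mathbb{F}_{q^2} \times \mathbb{F}_{q^2}$ that preserves the adjacency relation of $\mathcal{A}_{q^2,d}$ in both directions. The crucial structural fact I will exploit is that $d = p^s$ is a power of the characteristic, so the map $y \mapsto y^d$ is a Frobenius and is therefore additive: $(a + \alpha)^d = a^d + \alpha^d$ for all $a , \alpha \in \mathbb{F}_{q^2}$.

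First I would check that $\phi_k$ sends vertices to vertices. Writing $k = \alpha^d + \alpha$ and using additivity of Frobenius, the new first coordinate is $a^d + a + \alpha^d + \alpha = (a + \alpha)^d + (a + \alpha)$, which is again of the valid form $c^d + c$ with $c = a + \alpha$. Thus $\phi_k$ carries the vertex with parameter $a$ to the vertex with parameter $a + \alpha$, shifting the second coordinate by a quantity that depends only on $a$ and on the fixed $\alpha$. For bijectivity, recall that by Lemma \ref{lemma2} the map $y \mapsto y^d + y$ is a permutation of $\mathbb{F}_{q^2}$, so the first coordinate of a vertex determines its parameter uniquely. Given any target vertex I can therefore recover the parameter $a + \alpha$ of its preimage, hence $a$ itself, and then the second-coordinate shift is determined, so the preimage is unique; this shows $\phi_k$ is a bijection. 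One can also simply verify that $\phi_{-k}$ is the two-sided inverse.

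The heart of the argument is adjacency preservation. Suppose $(a^d + a , x)$ and $(b^d + b , y)$ are adjacent, so that $a^d b + a b^d = x + y$. I would compute the adjacency quantity for the two images $\phi_k((a^d + a , x))$ and $\phi_k((b^d + b , y))$, whose parameters are $a + \alpha$ and $b + \alpha$. Expanding
\[
(a + \alpha)^d (b + \alpha) + (a + \alpha)(b + \alpha)^d
\]
with the additivity of Frobenius produces $a^d b + a b^d$ together with the cross terms $a^d \alpha + b^d \alpha + a \alpha^d + b \alpha^d$ and the term $2 \alpha^{d+1}$. On the other side, the sum of the two shifted second coordinates is $x + y$ plus exactly those same cross terms and $2 \alpha^{d+1}$. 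Since $x + y = a^d b + a b^d$ by hypothesis, the two sides agree, so the images are adjacent. As this computation is a chain of equalities, reading it in reverse shows that adjacency of the images forces adjacency of the originals, giving preservation in both directions.

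I expect the only real obstacle to be the bookkeeping in the expansion: one must track the cross terms and use the commutativity of $\mathbb{F}_{q^2}$ to match, for instance $\alpha^d b$ against $b \alpha^d$, so that the symmetric form of the defining equation $a^d b + a b^d = x + y$ lines up with the symmetric shift $a^d \alpha + a \alpha^d$ applied to each vertex. Everything else is routine once the additivity of $y \mapsto y^d$ is invoked.
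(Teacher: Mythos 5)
Your proposal is correct and follows essentially the same route as the paper's proof: use the additivity of the Frobenius map $y \mapsto y^d$ to rewrite the image vertices with parameters $a+\alpha$ and $b+\alpha$, expand the adjacency condition, and observe that the cross terms and the two copies of $\alpha^{d+1}$ cancel, leaving exactly $a^d b + a b^d = x + y$. Your explicit verification of bijectivity (via Lemma \ref{lemma2} and the inverse $\phi_{-k}$) is a detail the paper leaves implicit, but it does not change the substance of the argument.
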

\begin{proof}
The vertex $\phi_k ( ( a^d + a , x ) )$ is adjacent to 
$\phi_k ( (b^d + b , y ) )$ if and only if 
\begin{equation*}\label{l3:1}
v=( a^d + a + \alpha^d + \alpha , x + a^d \alpha + a \alpha^d + \alpha^{d+1} ) 
\end{equation*}
is adjacent to 
\begin{equation*}\label{l3:2}
w=( b^d + b + \alpha^d + \alpha , y + b^d \alpha + b \alpha^d + \alpha^{d+1} ).
\end{equation*}
Since $d$ is a power of $p$, $v=(  ( a + \alpha)^d + ( a + \alpha ) , x + a^d \alpha + a \alpha^d + \alpha^{d+1} )$.
Similarly, 
\[
w=(  ( b + \alpha)^d + ( b + \alpha ) , y + b^d \alpha + b \alpha^d + \alpha^{d+1} ).
\]
From this we see that $v$ is adjacent to $w$ if and only if 
\begin{equation}\label{eq:v2-1}
\begin{split}
(a + \alpha )^d ( b + \alpha) + (a + \alpha ) (b + \alpha )^d = \\
x + a^d \alpha + a \alpha^d + \alpha^{d+1} + y + b^d \alpha + b \alpha^d + \alpha^{d+1}.
\end{split}
\end{equation}  
A routine calculation shows that (\ref{eq:v2-1}) is equivalent to the equation $a^d b + a b^d = x + y$ which holds if and only if 
$(a^d + a , x)$ is adjacent to $(b^d + b , y )$ in $\mathcal{A}_{q^2 , d}$.    
\end{proof}

\bigskip

Let $J = I^+ \cup I^-$ and observe that 
$J = \{ ( a^d + a , x_1 + x_2 \mu ) : a , x_1 \in \mathbb{F}_q , x_2 \in \mathbb{F}_q^* \}$.
Let 
\[
K = \bigcup_{ \beta \in \mathbb{F}_q } \phi_{ ( \beta \mu )^d + ( \beta \mu ) } ( J).
\]

\begin{lemma}\label{lemma5}
If $\mathcal{A}_{q^2,d} [K]$ is the subgraph of $\mathcal{A}_{q^2,d}$ induced by $K$, then 
\[
\chi ( \mathcal{A}_{q^2,d} [K] ) \leq 2q.
\]
\end{lemma}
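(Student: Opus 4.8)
The plan is to exhibit $K$ as the union of $2q$ independent sets, and then invoke the standard fact that a graph whose vertex set is covered by $m$ independent sets has chromatic number at most $m$. (Color each vertex by the index of one independent set containing it; if two vertices receive the same color then they lie in a common independent set, so they cannot be adjacent, and the coloring is proper.) Thus it suffices to write $K$ as a union of $2q$ independent sets in $\mathcal{A}_{q^2,d}$.

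First I would use Lemma \ref{lemma3}, which tells us that $I^+$ and $I^-$ are independent sets, so that $J = I^+ \cup I^-$ is a union of two independent sets. Next, for each $\beta \in \mathbb{F}_q$ I would apply Lemma \ref{lemma4} with $\alpha = \beta \mu$ and $k = (\beta\mu)^d + (\beta\mu)$; this is a legitimate choice, since $\alpha = \beta\mu$ literally realizes $k = \alpha^d + \alpha$ (and indeed every element of $\mathbb{F}_{q^2}$ has this form because $x \mapsto x^d + x$ is a permutation by Lemma \ref{lemma2}). Hence $\phi_{(\beta\mu)^d + (\beta\mu)}$ is an automorphism of $\mathcal{A}_{q^2,d}$, and an automorphism carries independent sets to independent sets, so $\phi_{(\beta\mu)^d + (\beta\mu)}(I^+)$ and $\phi_{(\beta\mu)^d + (\beta\mu)}(I^-)$ are each independent.

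Assembling these observations, and using $J = I^+ \cup I^-$, we obtain
\[
K = \bigcup_{\beta \in \mathbb{F}_q} \phi_{(\beta\mu)^d + (\beta\mu)}(J) = \bigcup_{\beta \in \mathbb{F}_q} \left( \phi_{(\beta\mu)^d + (\beta\mu)}(I^+) \cup \phi_{(\beta\mu)^d + (\beta\mu)}(I^-) \right),
\]
which is a union of $2q$ independent sets (two for each of the $q$ choices of $\beta \in \mathbb{F}_q$). Each of these sets is independent in $\mathcal{A}_{q^2,d}$, hence independent in the induced subgraph $\mathcal{A}_{q^2,d}[K]$, and together they cover $K$. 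Applying the covering fact from the first paragraph then yields $\chi(\mathcal{A}_{q^2,d}[K]) \le 2q$.

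There is no serious obstacle here, as the substance has already been front-loaded into Lemmas \ref{lemma3} and \ref{lemma4}; Lemma \ref{lemma5} is essentially a bookkeeping step that counts the independent sets. The only point warranting a word of care is that the $q$ images $\phi_{(\beta\mu)^d + (\beta\mu)}(J)$ need not be disjoint, but this is irrelevant for an upper bound on the chromatic number, since the covering argument only requires that every vertex of $K$ belong to at least one of the $2q$ independent sets. (The genuinely delicate part of proving Theorem \ref{th1} lies elsewhere, in controlling the vertices of $G_f$ outside $K$, which account for the additional $O(q/\log q)$ term.)
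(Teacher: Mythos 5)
Your proposal is correct and follows essentially the same route as the paper: both arguments combine Lemma \ref{lemma3} (independence of $I^+$ and $I^-$) with Lemma \ref{lemma4} (the maps $\phi_{(\beta\mu)^d+(\beta\mu)}$ are automorphisms, hence preserve independence) to cover $K$ by $2q$ independent sets, two for each $\beta \in \mathbb{F}_q$. Your explicit remarks that the images may overlap harmlessly and that $k=(\beta\mu)^d+(\beta\mu)$ is a legitimate parameter for Lemma \ref{lemma4} are details the paper leaves implicit, but the substance is identical.
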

\begin{proof}
By Lemma \ref{lemma3}, the vertices in $J$ may be colored using at most 2 colors.  
By Lemma \ref{lemma4}, the vertices in $\phi_{ k } ( J)$ can also be colored using at most 2 colors.  Since $K$ is the union 
of $q$ sets of the form $\phi_k (J)$ where $k \in \mathbb{F}_{q^2}$, we may color $K$ using at most $2q$ colors.  
\end{proof}

\bigskip

Lemma \ref{lemma5} shows that we can color all but at most $O(q^3)$ vertices of $\mathcal{A}_{q^2,d}$ with at most $2q$ colors.  We now show that the remaining vertices can be colored with $o(q)$ colors.  
Before stating the next lemma we recall that $\mu^d = u_1 + u_2 \mu$ and we let $\mu^{d+1} = w_1  + w_2 \mu$ 
where $u_1 , u_2 , w_1 , w_2 \in \mathbb{F}_q$.  

\begin{lemma}\label{lemma6}
If $X = ( \mathbb{F}_{q^2} \times \mathbb{F}_{q^2}) \backslash K$, then 
\[
X =  \{ ( (a + \beta \mu )^d + ( a + \beta \mu) , t_1 + ( a^d \beta + a \beta^d u_2 + \beta^{d+1} w_2 ) \mu ) : a , \beta , t_1 \in \mathbb{F}_q \}.
\]
\end{lemma}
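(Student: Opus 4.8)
The plan is to compute $\phi_{(\beta\mu)^d + \beta\mu}(J)$ explicitly and then identify the complement fibre by fibre. First I would set $\alpha = \beta\mu$ in the automorphism of Lemma \ref{lemma4}. Since $d$ is a power of $p$, we have $(a+\beta\mu)^d = a^d + (\beta\mu)^d$, so the first coordinate of $\phi_{(\beta\mu)^d+\beta\mu}((a^d+a,\, x_1+x_2\mu))$ is $(a+\beta\mu)^d + (a+\beta\mu)$. For the second coordinate I would substitute $\mu^d = u_1 + u_2\mu$ and $\mu^{d+1} = w_1 + w_2\mu$ into $x_1 + x_2\mu + a^d(\beta\mu) + a(\beta\mu)^d + (\beta\mu)^{d+1}$ and collect terms, obtaining the $\mathbb{F}_q$-part $x_1 + a\beta^d u_1 + \beta^{d+1}w_1$ and the $\mu$-coefficient $x_2 + a^d\beta + a\beta^d u_2 + \beta^{d+1}w_2$.

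The second step, and the crux, is a bijectivity/uniqueness observation. As $(a,\beta)$ ranges over $\mathbb{F}_q \times \mathbb{F}_q$, the element $a+\beta\mu$ ranges bijectively over $\mathbb{F}_{q^2}$ because $\{1,\mu\}$ is an $\mathbb{F}_q$-basis (recall $\mu \notin \mathbb{F}_q$), and $z \mapsto z^d + z$ is a permutation of $\mathbb{F}_{q^2}$ by Lemma \ref{lemma2} (taking $u_2 = \delta = 1$). Hence the composite $(a,\beta) \mapsto (a+\beta\mu)^d + (a+\beta\mu)$ is a bijection from $\mathbb{F}_q \times \mathbb{F}_q$ onto $\mathbb{F}_{q^2}$, so every possible first coordinate $w \in \mathbb{F}_{q^2}$ arises from a unique pair $(a,\beta)$, and therefore from a single term $\phi_{(\beta\mu)^d+\beta\mu}(J)$ in the union defining $K$.

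Fixing this unique $(a,\beta)$, I would then read off which second coordinates appear in $K$ over the first coordinate $w$. As $x_1$ runs over $\mathbb{F}_q$, the $\mathbb{F}_q$-part $x_1 + a\beta^d u_1 + \beta^{d+1}w_1$ runs over all of $\mathbb{F}_q$; as $x_2$ runs over $\mathbb{F}_q^* = \mathbb{F}_q \setminus \{0\}$, the $\mu$-coefficient $x_2 + a^d\beta + a\beta^d u_2 + \beta^{d+1}w_2$ runs over all of $\mathbb{F}_q$ except the single value $a^d\beta + a\beta^d u_2 + \beta^{d+1}w_2$, which is attained only at the excluded $x_2 = 0$. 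Consequently the fibre of $K$ over $w$ omits exactly those second coordinates whose $\mu$-coefficient equals $a^d\beta + a\beta^d u_2 + \beta^{d+1}w_2$ and whose $\mathbb{F}_q$-part is arbitrary. Taking the union over all $w$ (equivalently over all $(a,\beta)$) and renaming the free $\mathbb{F}_q$-part $t_1$ yields precisely the stated description of $X$.

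The main obstacle is the uniqueness in the second step: one must be certain that no two distinct pairs $(a,\beta)$ share a first coordinate, for otherwise a second coordinate omitted by one term of the union might be supplied by another, and the ``missing'' $\mu$-coefficient would not in fact be missing from $K$. This is exactly what the permutation property of $z \mapsto z^d + z$ guarantees. The remaining work, namely the collection of terms in the first step and the counting in the third, is routine once the substitutions $\mu^d = u_1 + u_2\mu$ and $\mu^{d+1} = w_1 + w_2\mu$ are in hand.
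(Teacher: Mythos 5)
Your proposal is correct and follows essentially the same route as the paper: expand $\phi_{(\beta\mu)^d+\beta\mu}(J)$ via the substitutions $\mu^d = u_1+u_2\mu$ and $\mu^{d+1} = w_1+w_2\mu$, use the fact that $z \mapsto z^d+z$ permutes $\mathbb{F}_{q^2}$ so that each first coordinate arises from a unique pair $(a,\beta)$ and hence from a single term of the union defining $K$, and then read off that the fibre of $X$ over that first coordinate consists exactly of the second coordinates with $\mu$-coefficient $a^d\beta + a\beta^d u_2 + \beta^{d+1}w_2$. The uniqueness point you flag as the crux is indeed the step the paper treats implicitly, relying on the permutation property of $x \mapsto x^d + x$ established just before Lemma \ref{iso}.
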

\begin{proof}
For any $\beta \in \mathbb{F}_q$, the set $\phi_{ ( \beta \mu)^d + ( \beta \mu ) } (J)$ can be written as 
\[
\{ (a^d + a + ( \beta \mu )^d + ( \beta \mu) , 
x_1 + x_2 \mu + a^d ( \beta \mu) + a ( \beta \mu)^d + ( \beta \mu)^{d+1} ) : a , x_1 \in \mathbb{F}_q , x_2 \in \mathbb{F}_q^* \}.
\]
Let $(s_1 + s_2 \mu , t_1 + t_2 \mu ) \in \mathbb{F}_{q^2} \times \mathbb{F}_{q^2}$ where $s_1,s_2,t_1,t_2 \in \mathbb{F}_q$.  
The vertex $(s_1 + s_2 \mu , t_1 + t_2 \mu )$ is in $K$ if we can 
find $a , x_1 , \beta \in \mathbb{F}_q$ and $x_2 \in \mathbb{F}_q^*$ such that 
\begin{eqnarray}
s_1 + s_2 \mu &  = & ( a + \beta \mu )^d + a + \beta \mu, \label{eq11} \\ 
t_1 + t_2 \mu & = & x_1 + x_2 \mu + a^d ( \beta \mu )  + a ( \beta \mu )^d + ( \beta \mu )^{d+1}. \label{eq12}
\end{eqnarray}
Since every element of $\mathbb{F}_{q^2}$ can be written as $z^d + z$ for some $z \in \mathbb{F}_{q^2}$, we can 
write $s_1 + s_2 \mu = z^d + z$ and then choose $a$ and $\beta$ in $\mathbb{F}_q$ so that 
$z = a + \beta \mu$.  With this choice of $a$ and $\beta$, equation (\ref{eq11}) holds.  

Since $\mu^{d+1} = w_1 + w_2 \mu$, equation (\ref{eq12}) can be rewritten as
\begin{equation}\label{eq13}
t_1 + t_2 \mu = 
(x_1 + a \beta^d u_1 + \beta^{d+1} w_1 ) + 
(x_2 + a^d \beta + a \beta^d u_2 + \beta^{d+1} w_2 ) \mu.
\end{equation}
Let $x_1 = t_1 - a \beta^d u_1 - \beta^{d+1} w_1$.  If 
$t_2 \neq a^d \beta + a \beta^d u_2 + \beta^{d+1} w_2$, then we can take 
$x_2 = t_2 - a^d \beta - a \beta^d u_2 - \beta^{d+1} w_2$ and (\ref{eq12}) holds.  
Therefore, the vertices in $\mathbb{F}_{q^2} \times \mathbb{F}_{q^2}$ not in $K$ are those vertices in the set
\[
\{ ( (a + \beta \mu )^d + ( a + \beta \mu) , t_1 + ( a^d \beta + a \beta^d u_2 + \beta^{d+1} w_2 ) \mu ) : a , \beta , t_1 \in \mathbb{F}_q \}.
\]
\end{proof}

\begin{lemma}\label{lemma7}
If $\mathcal{A}_{q^2,d} [X]$ is the subgraph of $\mathcal{A}_{q^2,d}$ induced by $X= ( \mathbb{F}_{q^2} \times 
\mathbb{F}_{q^2} ) \backslash K$, then 
\[
\chi ( \mathcal{A}_{q^2,d} [X] ) = O \left( \frac{q}{ \log q} \right).
\]
\end{lemma}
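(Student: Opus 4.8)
The plan is to show that the induced subgraph $\mathcal{A}_{q^2,d}[X]$ has maximum degree $O(q)$ and that its neighborhoods are sparse, and then to invoke a coloring theorem for locally sparse graphs in order to gain the extra logarithmic factor. Throughout I write $g(z)$ for the $\mu$-coordinate of $z^{d+1}$, so that, by Lemma \ref{lemma6} together with the computation $a^d\beta + a\beta^d u_2 + \beta^{d+1}w_2 = g(a+\beta\mu)$, every vertex of $X$ has the form $(z^d + z,\, t_1 + g(z)\mu)$ with $z \in \mathbb{F}_{q^2}$ and $t_1 \in \mathbb{F}_q$.

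First I would make the adjacency relation inside $X$ explicit. Using the identity $a^d b + a b^d = (a+b)^{d+1} - a^{d+1} - b^{d+1}$ (valid since $z \mapsto z^d$ is additive), two vertices of $X$ with parameters $(z,t_1)$ and $(w,t_1')$ are adjacent only if the $\mu$-coordinate of $z^d w + z w^d$ equals $g(z)+g(w)$; since $g(2s)=4g(s)$ and $g(-s)=g(s)$ in our setting, a short computation reduces this $\mu$-part to the single condition $(z-w)^{d+1}\in\mathbb{F}_q$. Once this holds, the remaining $\mathbb{F}_q$-part of the adjacency equation is a linear relation that determines $t_1'$ uniquely from $t_1$ and $w$. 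Hence each vertex has at most $|S|$ neighbors, where $S=\{s\in\mathbb{F}_{q^2}:s^{d+1}\in\mathbb{F}_q\}$. To bound $|S|$, note that for $s\neq 0$ the condition $s^{d+1}\in\mathbb{F}_q$ is equivalent to $s^{(d+1)(q-1)}=1$, so $S\setminus\{0\}$ is the subgroup of $\mathbb{F}_{q^2}^*$ of order $(q-1)\gcd(d+1,q+1)$. For an admissible pair we have $2n=rs$ with $r$ odd, which forces $s$ to be even with $v_2(s)=v_2(n)+1$, and therefore $\gcd(d+1,q+1)=\gcd(p^s+1,p^n+1)=2$. Thus $|S|=2q-1$ and the maximum degree satisfies $\Delta(\mathcal{A}_{q^2,d}[X])\le 2q-1=O(q)$.

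The final ingredient is local sparsity. Because $\mathcal{A}_{q^2,d}$ is isomorphic to an induced subgraph of the polarity graph $G_f$ (Lemma \ref{iso}), and in any polarity graph two distinct vertices $p_1,p_2$ have at most one common neighbor — the lines $p_1^{\omega}$ and $p_2^{\omega}$ meet in a unique point — the graph $\mathcal{A}_{q^2,d}[X]$ contains no $4$-cycle. Consequently, for each vertex $v$ the neighborhood $N(v)$ induces a graph of maximum degree at most $1$ (each $u\in N(v)$ can have at most one further neighbor in $N(v)$, namely the unique common neighbor of $u$ and $v$), so $N(v)$ spans at most $\Delta/2$ edges. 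Applying the theorem of Alon, Krivelevich, and Sudakov — a graph of maximum degree $\Delta$ whose neighborhoods span at most $\Delta^2/f$ edges has chromatic number $O(\Delta/\log f)$ — with $f=\Theta(\Delta)=\Theta(q)$ yields $\chi(\mathcal{A}_{q^2,d}[X])=O(\Delta/\log\Delta)=O(q/\log q)$.

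I expect the main obstacle to be the degree estimate, namely reducing the adjacency inside $X$ to the single condition $(z-w)^{d+1}\in\mathbb{F}_q$ and then pinning down $\gcd(d+1,q+1)=2$ from the admissibility hypothesis; the identity for $z^d w + z w^d$ and the bookkeeping of $\mathbb{F}_q$-parts are where the technical care is needed. Once the bound $\Delta=O(q)$ is in hand, the $C_4$-freeness inherited from the polarity structure and the sparse-neighborhood coloring theorem combine routinely to deliver the logarithmic savings.
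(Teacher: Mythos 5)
Your proposal is correct, and while it ends the same way as the paper (maximum degree $O(q)$, then $C_4$-freeness plus Alon--Krivelevich--Sudakov), the heart of the argument, namely the degree bound, is obtained by a genuinely different route. The paper partitions $X$ into the $q$ classes $X_\beta$, shows a fixed vertex has at most $q$ neighbors in its own class, and invokes Lemma \ref{lemma2} (which relies on the special $\mu$ of Lemma \ref{lemma1}, with $u_2$ a $(d-1)$-th power) to get at most one neighbor in each other class $X_\gamma$, yielding $\Delta < 2q$. You instead use the reparameterization $(z^d+z,\,t_1+g(z)\mu)$ of Lemma \ref{lemma6} to reduce adjacency inside $X$ to a condition on the difference alone: since $z^dw+zw^d=(z+w)^{d+1}-z^{d+1}-w^{d+1}$, the $\mu$-part of the adjacency equation is equivalent to $(z-w)^{d+1}\in\mathbb{F}_q$, after which the $\mathbb{F}_q$-part determines $t_1'$ uniquely. (Your justification of this reduction via $g(2s)=4g(s)$ and $g(-s)=g(s)$ is vaguer than needed; the cleanest check is the parallelogram identity $(z+w)^{d+1}+(z-w)^{d+1}=2z^{d+1}+2w^{d+1}$, valid because $x\mapsto x^d$ is additive and $(-1)^{d+1}=1$, which shows the condition $g(z+w)=2g(z)+2g(w)$ is exactly $g(z-w)=0$.) The degree is then at most $|S|$ with $S=\{s:s^{d+1}\in\mathbb{F}_q\}$, and your count is exact: $|S|=1+(q-1)\gcd(d+1,q+1)=2q-1$, the gcd being $2$ because admissibility ($2n/s$ odd) forces the $2$-adic valuation of $s$ to be one more than that of $n$, whence $\gcd(p^s+1,p^n+1)=2$ for odd $p$. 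Your route buys a sharper, structural neighbor count that bypasses Lemma \ref{lemma2} and the special choice of $\mu$ at this stage, and it makes transparent exactly where admissibility enters; the paper's route is more mechanical but reuses machinery already built (Lemmas \ref{lemma1} and \ref{lemma2}), which is also what makes its argument portable to other semifield planes such as the Dickson plane of Section \ref{coloring 2}, where an analogue of your gcd computation is less immediate. Both proofs finish identically: $C_4$-freeness (inherited from the polarity graph via Lemma \ref{iso}) means every neighborhood spans at most $\Delta/2\leq q$ edges, and the Alon--Krivelevich--Sudakov theorem gives $\chi(\mathcal{A}_{q^2,d}[X])=O(q/\log q)$.
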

\begin{proof}
For $\beta \in \mathbb{F}_q$, partition $X$ into the sets $X_{\beta}$ where 
\[
X_{ \beta } = \{ ( (a + \beta \mu )^d + ( a + \beta \mu) , t_1 + ( a^d \beta + a \beta^d u_2 + \beta^{d+1} w_2 ) \mu ) : a , t_1 \in \mathbb{F}_q \}.
\]
Fix a $\beta \in \mathbb{F}_q$ and a vertex  
\[
v=( (a + \beta \mu)^d + ( a + \beta \mu ) , t_1 + ( a^d \beta + a \beta^d u_2 + \beta^{d+1} w_2 ) \mu )
\]
in $X_{ \beta}$.  Let $\gamma \in \mathbb{F}_q$.  We want to count the number 
of vertices 
\[
w=( (x + \gamma \mu)^d + ( x + \gamma \mu ) , y_1 + ( x^d \gamma + x \gamma^d u_2 + \gamma^{d+1} w_2 ) \mu )
\]
in $X_{ \gamma}$ that are adjacent to $v$.  The vertices $v$ and $w$ are adjacent if and only if 
\begin{equation}\label{adj eq}
\begin{split}
(a + \beta \mu)^d ( x + \gamma \mu) + (a + \beta \mu ) (x + \gamma \mu )^d \\
= t_1 + y_1 + ( a^d \beta + a \beta^d u_2 + \beta^{d+1} w_2 + x^d \gamma + x \gamma^d u_2 + \gamma^{d+1} w_2 ) \mu.
\end{split}
\end{equation}
If $\gamma = \beta$, then we can choose $x \in \mathbb{F}_q$ in $q$ different ways and the above equation uniquely determines 
$y_1$.  We conclude that the vertex $v \in X_{ \beta }$ has at most $q$ other neighbors in $X_{ \beta}$.  

Assume now that $\gamma \neq \beta$.  We need to count how many $x , y_1 \in \mathbb{F}_q$ satisfy 
(\ref{adj eq}).  
A computation using the relations
$\mu^d = u_1 + u_2 \mu$ and $\mu^{d+1} = w_1 + w_2 \mu$ shows that (\ref{adj eq}) is equivalent to 
\begin{equation*}
\begin{split}
a^d x + a^d \gamma \mu + \beta^d x ( u_1  + u_2 \mu ) + \beta^d \gamma (w_1 + w_2 \mu) \\
+ ax^d + a \gamma^d ( u_1  + u_2 \mu) + \beta x^d \mu + \beta \gamma^d ( w_1 + w_2 \mu) \\
= t_1 + y_1 + ( a^d \beta + a \beta^d u_2 + \beta^{d+1} w_2 + x^d \gamma + x \gamma^d u_2 + \gamma^{d+1} w_2 ) \mu.
\end{split}
\end{equation*}
Equating the coefficients of $\mu$ gives
\begin{equation*}
a^d \gamma  + \beta^d x u_2 + \beta^d \gamma w_2 + a \gamma^d u_2 + \beta x^d + \beta \gamma^d w_2 \\
= a^d \beta + a \beta^d u_2 + \beta^{d+1} w_2 + x^d \gamma + x \gamma^d u_2 + \gamma^{d+1} w_2.
\end{equation*}
This equation can be rewritten as 
\begin{equation}\label{eq21}
x^d ( \gamma - \beta ) + x ( \gamma^d - \beta^d)u_2 = \xi
\end{equation}
for some $\xi \in \mathbb{F}_q$ that depends only on $a$, $\gamma$, $\beta$, and $\mu$.  
Since $\gamma - \beta \neq 0$, equation (\ref{eq21}) is equivalent to 
\begin{equation}\label{eq22}
x^d + u_2 ( \gamma - \beta)^{d-1} x= \xi ( \gamma - \beta )^{-1}.
\end{equation}
By Lemma \ref{lemma2}, (\ref{eq22}) has a unique solution for $x$
since $u_2$ is a $(d-1)$-power and $\gamma - \beta \in \mathbb{F}_q^*$.  Once $x$ is determined, (\ref{adj eq}) gives 
a unique solution for $y_1$.  Therefore, $v$ has at most one neighbor in $X_{ \beta}$.
We conclude that the degree of $v$ in $X$ is at most $q + (q-1) <2q$.  

The graph $\mathcal{A}_{q^2,d}[X]$ does not contain a 4-cycle and has maximum degree at most $2q$.  This implies that the 
neighborhood of any vertex contains at most $q$ edges.  
By a result of Alon, Krivelevich, and Sudakov \cite{aks}, the graph $\mathcal{A}_{q^2,d} [X]$ can be colored using  
$O \left ( \frac{q}{ \log q} \right)$ colors.  
\end{proof}

\bigskip

\begin{proof}[Proof of Theorem \ref{th1}]
Partition the vertex set of $\mathcal{A}_{q^2,d}$ as 
\[
V( \mathcal{A}_{q^2,d} ) = K \cup X.
\]
By Lemmas \ref{lemma5} and \ref{lemma7}, we can color the vertices in $K \cup X$ using 
$2q + O \left( \frac{q}{ \log q} \right)$ colors.  This gives a coloring of the vertices 
in $\mathbb{F}_{q^2} \times \mathbb{F}_{q^2}$ in $G_f$ and it only remains to color the vertices in the set 
$\{ (m) : m \in \mathbb{F}_{q^2} \} \cup \{ ( \infty )  \}$.    

The vertex $(\infty)$ is adjacent to $(m)$ for every $m \in \mathbb{F}_{q^2}$.  
Since $G_f$ is $C_4$-free, 
the subgraph of $G_f$ induced by the neighborhood of $( \infty )$ induces a 
a graph with maximum degree at most 1.  
We may color 
the vertices in $\{ ( m) : m \in \mathbb{F}_{q^2} \} \cup \{ ( \infty ) \}$ using three new colors not used to color 
$ \mathbb{F}_{q^2} \times \mathbb{F}_{q^2}$ to obtain a 
$2q + O ( \frac{q}{ \log q} )$ coloring of $G_f$.  
\end{proof}

%%%%%%%%%%%%%%%%%%%%%%%%%%%%%%%%%%%%%%%%%%%%%%%%%%%%%%

\section{Dickson Commutative Division Rings}\label{coloring 2}

Let $p$ be an odd prime, $n > 1$ be an integer, $q = p^n$, and $a$ be any element of $\mathbb{F}_{q}$ that is not a square.  
Let $1 \leq r < n$ be an integer.  Let $D$ be a 2-dimensional vector space over $\mathbb{F}_q$ with 
basis $\{1, \lambda \}$.  Define a product $\cdot$ on $D$ by the rule 
\[
(x + \lambda y ) \cdot ( z + \lambda t) = xz + a y^{p^r} t^{p^r} + \lambda ( yz + st).
\]
With this product and the usual addition, $D$ is a commutative division ring (see \cite{hp}, Theorem 9.12 and note that it is common to call such a structure a semifield).  
We can use $D$ to define a projective plane $\Pi$ (see \cite{hp}, Theorem 5.2).      
This plane also has an orthogonal polarity (see \cite{hp}, page 248).  
Let $\mathcal{GD}_{q^2}$ be the corresponding orthogonal polarity graph.  Using the argument of Section 
\ref{coloring 1}, one can prove that 
\[
\chi ( \mathcal{GD}_{q^2} ) \leq 2q + \left( \frac{ q}{ \log q} \right).
\]
A rough outline is as follows.  Let 
$\mathcal{AD}_{q^2}$ be the subgraph of $\mathcal{GD}_{q^2}$ induced by 
the vertices 
\[
\{ ( (x_1 + \lambda x_2 , y_1 + \lambda y_2 ) : x_1 , x_2 , y_1 , y_2 \in \mathbb{F}_q \}.
\]
Partition $\mathbb{F}_{q}^*$ into the sets $\mathbb{F}_q^+$ and $\mathbb{F}_q^-$ where 
$a \in \mathbb{F}_q^+$ if and only if $-a \in \mathbb{F}_q^-$.  
The sets 
\[
I^+ = \{ ( x_2 \lambda , y_1 + y_2 \lambda ) : x_2, y_1 \in \mathbb{F}_q , y_2 \in \mathbb{F}_q^+ \}
\]
and 
\[
I^- = \{ ( x_2 \lambda , y_1 + y_2 \lambda ) : x_2, y_1 \in \mathbb{F}_q , y_2 \in \mathbb{F}_q^- \}
\]
are independent sets in $\mathcal{GD}_{q^2}$.
For any $k \in \mathbb{F}_q$, the map 
\[
\phi_k ( x_1 + \lambda x_2 , y_1 + \lambda y_2 ) = 
(x_1 + \lambda x_2 + k , y_1 + \lambda y_2 + k x_1 + 2^{-1} k^2 + \lambda x_2 k )
\]
is an automorphism of $\mathcal{AG}_{q^2}$.

Let $J = I^+ \cup I^-$ and $K = \bigcup_{k \in \mathbb{F}_q } \phi_k (J)$ and observe that  
\[
K = \{ ( k + x_2 \lambda , y_1 + y_2 \lambda + 2^{-1} k^2 + \lambda x_2 k ) : x_1 , y_1 , k \in \mathbb{F}_q , y_2 \in \mathbb{F}_{q}^{*} \}.
\]
If $X = ( D \times D ) \backslash K$, then 
\[
X = \{ ( s_1 + s_2 \lambda , t_1 + (s_2 s_1) \lambda ) : s_1 , s_2 , t_1 \in \mathbb{F}_q \}.
\]
It can then be shown that the subgraph of $\mathcal{GD}_{q^2}$ induced by $X$ has maximum degree at most $2q$.  
The remaining details are left to the reader.  

%%%%%%%%%%%%%%%%%%%%%%%%%%%%%%%%%%%%%%%%%%%%%%%%%%%%%%%%%

\section{Concluding Remark}

The argument used to prove Theorem \ref{unitary} can be extended to other unitary polarity graphs.  We illustrate with an example.
Let $a$ and $e$ be integers with $a \not\equiv \pm ( \textup{mod}~2e )$, $e \equiv 0 ( \textup{mod}~4)$, and $\textup{gcd}(a,e) = 1$.  Let $f: \mathbb{F}_q \rightarrow \mathbb{F}_q$ be the polynomial $f(X) = X^n$ where $n = \frac{1}{2} ( 3^a + 1)$ and 
$q = 3^e$.  The map $f$ is a planar polynomial and the corresponding plane is the Coulter-Matthews plane \cite{cm}.
This plane has a unitary polarity whose action on the affine points and lines is given by 
\[
(x ,y)^{ \theta} = [ - x^{ \sqrt{q} } , - y^{ \sqrt{q} } ] ~~ \mbox{and} ~~ (a,b)^{ \theta} = ( - a^{ \sqrt{q} } , - b^{ \sqrt{q}} ).
\]
The proof of Theorem \ref{unitary} can be modified to show that the corresponding unitary polarity graph has an independent set 
of size $\frac{1}{2} q^{5/4} - o( q^{5/4} )$ that contains no absolute points.  The reason for the condition 
$e \equiv 0 ( \textup{mod}~4)$ instead of $e \equiv 0 ( \textup{mod}~2)$, which is the condition given in \cite{cm} for $f$ to be planar, is that we need $\sqrt{q}$ to be a square in order to apply Theorem \ref{th0} to the subgraphs that correspond 
to the $U_q^* [X_c]$ in the proof of Theorem \ref{unitary}.

%%%%%%%%%%%%%%%%%%%%%%%%%%%%%%%%%%%%%%%%%%%%%%%%%%%%%%%%%%%%%%%%%

\end{document}